\documentclass[12pt]{amsart}

\usepackage{amsmath,amssymb,amsthm,amsfonts,mathtools,fullpage}
\usepackage[hidelinks]{hyperref}

\newtheorem{Theorem}{Theorem}[section]
\newtheorem{Lemma}[Theorem]{Lemma}
\newtheorem{Proposition}[Theorem]{Proposition}
\newtheorem{Corollary}[Theorem]{Corollary}
\newtheorem{Conjecture}[Theorem]{Conjecture}
\newtheorem{Question}[Theorem]{Question}

\theoremstyle{definition}

\newcommand{\ZZ}{\mathbb Z}
\newcommand{\RR}{\mathbb R}
\newcommand{\conv}{\operatorname{conv}}

\newcommand{\DD}{\Delta}
\newcommand{\PP}{\mathcal P}

\title[Magic positivity for pseudo-symmetric Fano duals]
{Magic positivity for polar duals of pseudo-symmetric smooth Fano polytopes}

\author{Hidefumi Ohsugi}
\address{Hidefumi Ohsugi,
Department of Mathematical Sciences,
School of Science,
Kwansei Gakuin University,
Sanda, Hyogo 669-1330, Japan}
\email{ohsugi@kwansei.ac.jp}

\author{Akiyoshi Tsuchiya}
\address{Akiyoshi Tsuchiya,
Department of Information Science,
Faculty of Science,
Toho University,
Chiba, Japan}
\email{akiyoshi@is.sci.toho-u.ac.jp}

\subjclass[2020]{Primary 52B20; Secondary 05A15, 05A20}

\keywords{Ehrhart polynomial, magic positivity,
$\gamma$-positivity, pseudo-symmetric simplicial reflexive polytope,
smooth Fano polytope, symmetric edge polytope}

\begin{document}

\begin{abstract}
Motivated by Gal's conjecture, Ferroni and Higashitani conjectured that 
the $h^*$-polynomial of any Gorenstein lattice polytope admitting a quadratic triangulation is
$\gamma$-positive.  
Together with conjectures predicting
quadratic properties of toric ideals of smooth lattice polytopes, this
suggests that smooth Gorenstein lattice polytopes should have
$\gamma$-positive $h^*$-polynomials.
In this paper, we prove that the Ehrhart polynomial of the polar dual
of every pseudo-symmetric simplicial reflexive polytope is magic positive.
For pseudo-symmetric smooth Fano polytopes, we prove the stronger
statement that all magic coefficients are strictly positive.
Consequently, for every pseudo-symmetric simplicial reflexive polytope,
its polar dual is Ehrhart positive and has a real-rooted and
$\gamma$-positive $h^*$-polynomial.
We also prove that the Ehrhart polynomial of the polar dual of the
symmetric edge polytope of every cycle is magic positive.  This gives an
affirmative answer to a question of Konoike, who had previously proved
partial positivity results for this family.
\end{abstract}

\maketitle

\section{Introduction}

Let $P\subset\RR^d$ be a $d$-dimensional lattice polytope.  Its Ehrhart
polynomial is
\[
L_P(k)=|kP\cap\ZZ^d|,
\]
and its Ehrhart series is
\[
1+\sum_{k\ge1}L_P(k)t^k
=
\frac{h^*(P;t)}{(1-t)^{d+1}}.
\]
We refer to \cite{BeckRobins} for the basic theory of Ehrhart polynomials
and Ehrhart series.  The numerator $h^*(P;t)$ is the $h^*$-polynomial of
$P$, whose coefficients are nonnegative integers by Stanley's theorem
\cite{StanleyDecompositions}.  If $P$ is Gorenstein, then $h^*(P;t)$ is
palindromic.  Writing $s=\deg h^*(P;t)$, it therefore has a unique
expansion
\[
h^*(P;t)
=
\sum_{i=0}^{\lfloor s/2\rfloor}
\gamma_i t^i(1+t)^{s-2i}.
\]
The polynomial $h^*(P;t)$ is called \emph{$\gamma$-positive} if
$\gamma_i\ge0$ for every $i$.

One of the central positivity conjectures in algebraic and geometric
combinatorics is Gal's conjecture \cite{Gal}, which predicts
$\gamma$-positivity of the $h$-polynomial of every flag simplicial sphere.
Motivated by this conjecture, Ferroni and Higashitani proposed an
Ehrhart-theoretic analogue.  They conjectured that a Gorenstein lattice
polytope admitting a quadratic triangulation has a $\gamma$-positive
$h^*$-polynomial
\cite[Conjecture~3.28]{FerroniHigashitani}.  Here a quadratic triangulation
means a regular unimodular flag triangulation.

Another long-standing problem concerns the toric ideals of smooth
lattice polytopes. The B{\o}gvad conjecture predicts that the toric ideal
of every smooth lattice polytope is generated by quadratic binomials.
A stronger version asks
for a quadratic Gr\"obner basis, while a squarefree quadratic initial ideal
corresponds to a quadratic triangulation via the standard correspondence
between initial ideals of toric ideals and regular triangulations; see
\cite[Chapter~8]{SturmfelsGBCP}.  Although these statements are logically
distinct, together with the conjecture of Ferroni and Higashitani they
suggest the following positivity problem.

\begin{Conjecture}\label{conj:smooth-gorenstein}
Every smooth Gorenstein lattice polytope has a $\gamma$-positive
$h^*$-polynomial.
\end{Conjecture}

For background and classification results on smooth Gorenstein polytopes,
see \cite{LorenzNill}.

The purpose of this paper is to establish a stronger positivity property
for the polar duals of pseudo-symmetric simplicial reflexive polytopes.
A reflexive polytope is called \emph{pseudo-symmetric} if it has a pair
of opposite facets.

Nill classified pseudo-symmetric simplicial reflexive polytopes in
arbitrary dimension \cite{NillPseudoSymmetric}.  His classification
decomposes every such polytope into a free sum of a centrally symmetric
reflexive crosspolytope, even-dimensional del Pezzo polytopes, and
pseudo-del Pezzo polytopes.  The smooth Fano members form an important
subclass, classified earlier by Ewald
\cite{Ewald,EwaldBook}.
The smooth Fano subclass has been particularly tractable in several
aspects of toric geometry.  
For
instance, Haase used these polytopes and their polar duals to compute
stringy Hodge numbers of generic hypersurfaces in the associated toric
varieties and to study related questions concerning mirror symmetry and
crepant resolutions \cite{HaaseThesis}.  The toric Fano varieties
associated with the centrally symmetric members of this family have also
been studied in connection with Tian's $\alpha$-invariant and
K\"ahler--Einstein metrics \cite{BatyrevSelivanova,SongAlpha}.  
On the Ehrhart side, in our previous work
\cite[Theorem~5.8]{OhsugiTsuchiyaLocally}, we stated that the
$h^*$-polynomial of every pseudo-symmetric simplicial reflexive polytope
is $\gamma$-positive.
The proof there invokes Nill's decomposition, but the centrally
symmetric reflexive crosspolytope factor was inadvertently treated as
a standard crosspolytope.
We supply the missing argument in Appendix~\ref{app:previous-proof}.
The present paper instead studies the polar duals and proves the
stronger property of magic positivity.
These structural, geometric,
and enumerative properties make the polar duals of pseudo-symmetric
smooth Fano polytopes a natural testing ground for
Conjecture~\ref{conj:smooth-gorenstein}.

We recall the relevant geometric terminology and the correspondence
between smooth Fano polytopes and smooth reflexive polytopes in
Section~\ref{sec:closure}.  Our first main result is the following.

\begin{Theorem}\label{thm:pseudo-symmetric-main}
Let $P$ be a pseudo-symmetric simplicial reflexive polytope.
Then the Ehrhart polynomial $L_{P^\vee}(k)$ is magic positive.
If $P$ is a smooth Fano polytope, then all magic coefficients of
$L_{P^\vee}(k)$ are strictly positive.
\end{Theorem}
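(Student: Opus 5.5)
The plan is to reduce to the irreducible pieces of Nill's classification and treat each piece by an explicit computation. Throughout, a polynomial $f$ of degree $d$ is \emph{magic positive} if
\[
f(k)=\sum_{i=0}^{d} c_i\,k^i(k+1)^{d-i}\qquad\text{with all } c_i\ge 0,
\]
and the $c_i$ are its magic coefficients.

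\textbf{Reduction to the factors.} By \cite{NillPseudoSymmetric}, $P$ is, up to unimodular equivalence, a free sum $P_1\oplus\cdots\oplus P_r$. Here each $P_j$ is a centrally symmetric reflexive crosspolytope, an even-dimensional del Pezzo polytope $V_{2m}$, or a pseudo-del Pezzo polytope $\widetilde V_{2m}$. Polar duality turns free sums of reflexive polytopes (with the origin in the interior) into products, so
\[
P^\vee=P_1^\vee\times\cdots\times P_r^\vee
\qquad\text{and}\qquad
L_{P^\vee}(k)=\prod_j L_{P_j^\vee}(k).
\]
The basis $\{k^i(k+1)^{d-i}\}$ is multiplicative: $k^i(k+1)^{d-i}\cdot k^j(k+1)^{e-j}=k^{i+j}(k+1)^{d+e-i-j}$. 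Hence products of magic positive polynomials are magic positive. The magic coefficients of the product are convolutions of those of the factors, so if every factor has all magic coefficients strictly positive, so does the product. Moreover, by Ewald's classification, $P$ is smooth exactly when every factor is smooth, i.e. the crosspolytope factors are standard ones $\conv(\pm e_i)$. It therefore suffices to prove magic positivity for each of the three factor types, and strict positivity for the smooth ones.

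\textbf{The three factor computations.}
\begin{itemize}
\item \emph{Standard crosspolytope.} The dual is the cube $[-1,1]^d$, and
\[
L(k)=(2k+1)^d=\bigl(k+(k+1)\bigr)^d=\sum_i \binom{d}{i}k^i(k+1)^{d-i},
\]
so all magic coefficients are strictly positive.
\item \emph{Non-standard centrally symmetric reflexive crosspolytope.} The dual is a lattice parallelepiped $\{x:|\langle u_i,x\rangle|\le1\}$ with respect to a sublattice. I would write its Ehrhart polynomial via the standard zonotope formula $L(k)=\sum_{S}\mathrm{vol}(S)\,k^{|S|}$ over linearly independent subsets of the $2e_i$-type generators. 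Then I would regroup this in the magic basis, where only nonnegativity is required.
\item \emph{Del Pezzo dual $V_{2m}^\vee$.} Since $V_{2m}=\conv(\pm e_1,\dots,\pm e_{2m},\pm(e_1+\cdots+e_{2m}))$, we get $V_{2m}^\vee=\{x\in[-1,1]^{2m}:|\sum x_i|\le1\}$. Then $L(k)$ is the number of $x\in[-k,k]^{2m}\cap\ZZ^{2m}$ with $|\sum x_i|\le k$. I would compute it by summing the coefficients of $t^{s}$, $|s|\le k$, in $(t^{-k}+\cdots+t^{k})^{2m}$, or more cleanly by substituting $y_i=x_i+k$ and using the generating function of $\{0,\dots,2k\}^{2m}$. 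I expect a closed form of the shape $\sum_j a_j\,(2k+1)^{2m-2j}(\text{quadratic in }k)^j$, which can then be expanded in the magic basis.
\item \emph{Pseudo-del Pezzo dual $\widetilde V_{2m}^\vee$.} Here only one of the two extra vertices is present, so there is the single constraint $\sum x_i\le 1$. The count is $(2k+1)^{2m}$ minus the points with $\sum x_i>k$, and I would handle it in the same way as $V_{2m}^\vee$.
\end{itemize}

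\textbf{Main obstacle.} The hard step is proving nonnegativity (in fact positivity) of the magic coefficients for the $V_{2m}^\vee$ and $\widetilde V_{2m}^\vee$ families. My first approach is a bijective count: write $k^i(k+1)^{2m-i}$ as the number of lattice points in a product of $i$ half-open and $2m-i$ closed segments, and decompose $kV_{2m}^\vee\cap\ZZ^{2m}$ into sign/orthant pieces indexed by which coordinates are nonnegative. Each orthant piece is a dilated simplex-like region; I would then try to show the pieces pair up so that the region count is a positive combination of such box counts. If the bijection fails, the fallback is to substitute $k=-1/(1+z)$-type change of variables. The magic coefficients are the coefficients of $L$ rewritten as $(k+1)^{d}g\bigl(k/(k+1)\bigr)$, so they are the coefficients of $g(u)=(1-u)^{-d}\cdot L\bigl(u/(1-u)\bigr)(1-u)^d$. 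I would then verify positivity via a generating-function identity in $m$, using the central-trinomial-type coefficients that appear in the constraint $|\sum x_i|\le k$.

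Finally, for the non-smooth crosspolytope factor, I only need nonnegativity. Here I would use the $k\mapsto-1-k$ reciprocity symmetry that reflexivity imposes, namely $L(-1-k)=(-1)^dL(k)$, which makes the magic coefficients palindromic. Combined with the zonotope formula, this should suffice.
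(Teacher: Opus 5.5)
Your reduction to Nill's factors, your product rule for the magic basis, and your cube case all match the paper. The gap is the one hard step: nonnegativity (and, for the smooth case, strict positivity) of all magic coefficients of $L_{V_{2m}^\vee}(k)$, for which the proposal contains no argument. The orthant-pairing bijection and the ``generating-function identity'' are left unspecified. The closed form you anticipate does not settle anything by itself either. Since $V_{2m}^\vee$ is reflexive of even dimension, $L_{V_{2m}^\vee}$ is a polynomial in $u=k(k+1)$, even one with positive coefficients. But under the magic transform $u^j$ becomes $y^j(1-y)^{2m-2j}$, so passing to the magic basis produces alternating signs. In your shape with the natural quadratic $k(k+1)$, nonnegative coefficients would finish the job (both $2k+1$ and $k(k+1)$ are magic positive), but they fail already for $m=1$: $L_{V_2^\vee}(k)=3k^2+3k+1=(2k+1)^2-k(k+1)$.

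The content of the theorem is precisely the control of this cancellation; earlier work on the cycle family $Q_d$ only reached the first and last three coefficients. The paper supplies it as follows:
\begin{enumerate}
\item Starting from Konoike's formula $L_{Q_d}(k)=\sum_j(-1)^j\binom{d+1}{j}\binom{(d+1-2j)k+d-j}{d}$, it writes $L_{V_{2m}^\vee}=\sum_J\omega(J)u^{|J|}$ with $\omega(J)=4^{|J|}(m+\tfrac12)\,\DD[\lambda_m^2,(\lambda_j^2)_{j\in J}]\,t^{|J|-1/2}$ and $\lambda_j=j+\tfrac12$.
\item It splits each magic coefficient $d_s$ ($s\le m$) into local terms $\tau(I)$ over $s$-subsets $I$.
\item A Krawtchouk-type symmetrization plus node removal rewrites each $\tau(I)$ as a positive combination of $\DD[z_0,(z_b)_{b\notin L}]\prod_{b\notin L}(t+z_b)/\sqrt t$, with $z_0$ the largest node.
\item These are shown positive via $t^{-1/2}=\frac{2}{\pi}\int_0^\infty\frac{dv}{t+v^2}$ and the fold $v\mapsto x_0^2/v$.
\end{enumerate}
Your proof needs some mechanism of this strength.

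Three lesser points. First, the pseudo-del Pezzo factor needs no separate computation. You already have $L_{\widetilde V_{2m}^\vee}(k)=(2k+1)^{2m}-\#\{\sum_i x_i>k\}$. The map $x\mapsto -x$ exchanges $\{\sum_i x_i>k\}$ with $\{\sum_i x_i<-k\}$, so $L_{\widetilde V_{2m}^\vee}=\tfrac12\bigl((2k+1)^{2m}+L_{V_{2m}^\vee}\bigr)$. Its magic coefficients are therefore $\tfrac12\bigl(\binom{2m}{i}+a_i\bigr)$, where the $a_i$ are those of $L_{V_{2m}^\vee}$, so strict positivity is inherited from the del Pezzo case.

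Second, for the non-standard crosspolytope your zonotope route does work, but the reciprocity $L(-1-k)=(-1)^dL(k)$ is beside the point: it only makes the magic coefficients palindromic and says nothing about their signs. What is needed is an identification of the zonotope coefficients:
\begin{enumerate}
\item Realize $R=\conv(\pm e_1,\dots,\pm e_d)$ in a lattice $\Lambda$ with $\ZZ^d\subseteq\Lambda\subseteq\frac12\ZZ^d$, which reflexivity forces.
\item Let $C\subseteq\mathbb F_2^d$ be the image of $\Lambda$ under $x\mapsto 2x\bmod 2$ (an even code), and put $D=C^\perp$.
\item Then $R^\vee=[-1,1]^d$ in the lattice $\{z\in\ZZ^d: z\bmod 2\in D\}$, and the coefficient of $k^{|S|}$ in the zonotope formula is the number of words of $D$ supported in $S$.
\item Inclusion--exclusion then gives $L_{R^\vee}(k)=\sum_{b\in D}k^{\operatorname{wt}(b)}(k+1)^{d-\operatorname{wt}(b)}$, which is the paper's formula (obtained there by a direct parity count at even $k$).
\end{enumerate}

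Third, your expression for $g$ collapses to $L(u/(1-u))$; it should read $g(u)=(1-u)^dL\bigl(u/(1-u)\bigr)$.
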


Here, following Ferroni and Higashitani
\cite[Section~4.5]{FerroniHigashitani}, a polynomial $L(k)$ of degree $d$ has a unique expansion
\[
L(k)=\sum_{i=0}^d a_i k^i(k+1)^{d-i}.
\]
The numbers $a_0,\ldots,a_d$ are called the \emph{magic coefficients}
of $L$, and $L$ is called \emph{magic positive} if $a_i\ge0$ for every $i$.
We call
\[
M_L(y)
:=
(1-y)^dL\left(\frac{y}{1-y}\right)
=
\sum_{i=0}^d a_i y^i
\]
the \emph{magic transform} of $L$.
Thus $L$ is magic positive if and only if $M_L(y)$ has nonnegative
coefficients.
A lattice polytope $P$ is called \emph{Ehrhart positive} if all
coefficients of $L_P(k)$ in the standard monomial basis are positive.
If $L(k)=L_P(k)$, then $a_0=L_P(0)=1$, so magic positivity of $L_P(k)$
implies Ehrhart positivity of $P$.  It also implies real-rootedness of the
corresponding $h^*$-polynomial by the P\'olya-frequency-preserving
transformation studied by Br\"and\'en; see
\cite{BrandenPF} and \cite[Theorem~4.19]{FerroniHigashitani}.  For a
reflexive polytope, real-rootedness in turn implies $\gamma$-positivity.

Recent work has established magic positivity for other families, including
Pitman--Stanley polytopes and broad classes of partial permutohedra
\cite{AvilaFerroniMorales,LiuZhangPartialPermutohedra}.

We therefore obtain the following consequences.

\begin{Corollary}\label{cor:fano-consequences}
Let $P$ be a pseudo-symmetric simplicial reflexive polytope.
Then $P^\vee$ is Ehrhart positive, and $h^*(P^\vee;t)$ is
real-rooted and $\gamma$-positive.
\end{Corollary}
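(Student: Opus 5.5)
This corollary should follow from Theorem~\ref{thm:pseudo-symmetric-main} together with the general facts about magic positivity recalled just before the statement. The plan is to take $L(k)=L_{P^\vee}(k)$ with $d=\dim P$ and use the expansion $L(k)=\sum_{i=0}^d a_i k^i(k+1)^{d-i}$, where Theorem~\ref{thm:pseudo-symmetric-main} gives $a_i\ge 0$ for all $i$. We then derive the three claimed properties in turn.

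\emph{Ehrhart positivity.} Each basis polynomial $k^i(k+1)^{d-i}$ has nonnegative coefficients in the monomial basis. Its coefficient of $k^j$ is $\binom{d-i}{j-i}$, which is positive exactly when $i\le j$. Hence the coefficient of $k^j$ in $L$ is $\sum_{i\le j} a_i\binom{d-i}{j-i}$. This sum is at least $a_0\binom{d}{j}$. Evaluating the expansion at $k=0$ gives $a_0=L(0)=1$, so every coefficient is at least $\binom{d}{j}\ge 1$, and in particular positive.

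\emph{Real-rootedness.} We apply Br\"and\'en's result as stated in \cite[Theorem~4.19]{FerroniHigashitani}. It says that if a polynomial $L$ of degree $d$ has nonnegative magic coefficients, then the numerator $h(t)$ of $\sum_{k\ge0}L(k)t^k=h(t)/(1-t)^{d+1}$ is real-rooted. The main thing to check is that the hypotheses match our normalization, namely the basis $k^i(k+1)^{d-i}$ and a numerator with factor $(1-t)^{d+1}$. The mechanism is that the transformation sending $L$ to $h^*$ maps each $k^i(k+1)^{d-i}$ to a polynomial with nonnegative coefficients and only real roots. These images form a family with interlacing roots, so nonnegative combinations of them remain real-rooted. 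Since $P^\vee$ is a $d$-dimensional lattice polytope, this numerator is exactly $h^*(P^\vee;t)$, which is therefore real-rooted.

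\emph{$\gamma$-positivity.} Because $P$ is reflexive, $P^\vee$ is also reflexive, and Hibi's theorem then makes $h^*(P^\vee;t)$ palindromic of degree $d$. Its coefficients are nonnegative, and $h^*_0=1$. A palindromic polynomial with nonnegative coefficients and only real roots has only nonpositive real roots, and these are closed under $r\mapsto 1/r$. So it factors as a product of factors $(1+t)$ and quadratics $(1+r t)(1+t/r)=(1+t)^2+(r+1/r-2)t$ with $r>0$. Each such quadratic is $\gamma$-positive, because $r+1/r-2\ge0$. Products of $\gamma$-positive palindromic polynomials are again $\gamma$-positive, with the centers of symmetry adding. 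Hence $h^*(P^\vee;t)$ is $\gamma$-positive.

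There is no real obstacle beyond Theorem~\ref{thm:pseudo-symmetric-main} itself. The only care needed is confirming that the cited Br\"and\'en/Ferroni--Higashitani statement applies verbatim to this normalization of the magic basis.
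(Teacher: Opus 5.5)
Your proposal is correct and follows the paper's route. The paper also deduces the corollary from Theorem~\ref{thm:pseudo-symmetric-main} through the three implications recalled in the Introduction: $a_0=L_{P^\vee}(0)=1$ gives Ehrhart positivity, Br\"and\'en's result \cite[Theorem~4.19]{FerroniHigashitani} gives real-rootedness, and reflexivity of $P^\vee$ (hence palindromicity of $h^*$) converts real-rootedness into $\gamma$-positivity. You spell out details the paper leaves implicit, namely the lower bound $\binom{d}{j}$ on each Ehrhart coefficient and the factorization into $(1+t)$ and $(1+rt)(1+t/r)$.
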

In particular, if $P$ is smooth Fano, then $P^\vee$ is a smooth
reflexive, and hence smooth Gorenstein, lattice polytope.
Thus Theorem~\ref{thm:pseudo-symmetric-main} verifies
Conjecture~\ref{conj:smooth-gorenstein} for the polar duals of
pseudo-symmetric smooth Fano polytopes.

Haase proved that the polar dual of every pseudo-symmetric smooth Fano
polytope admits a quadratic triangulation
\cite[Corollaries~IV.2.6 and~IV.2.10]{HaaseThesis}.
Thus these polytopes satisfy the triangulation hypothesis in the
conjecture of Ferroni and Higashitani.
On the other hand, Ferroni constructed Gorenstein lattice polytopes
admitting quadratic triangulations whose $h^*$-polynomials are not
log-concave \cite[Theorem~1.4 and Section~6]{FerroniShenanigans}.
Since magic positivity implies real-rootedness of the corresponding
$h^*$-polynomial, whose nonnegative coefficients are then log-concave
by Newton's inequalities, these examples show that the
quadratic-triangulation hypothesis alone does not imply magic
positivity, even in the Gorenstein case.

A second application concerns symmetric edge polytopes, introduced and
studied in \cite{MatsuiEtAlGraphs}.  For a graph $G$ on
$[n]:=\{1,2,\dots,n\}$, its symmetric edge polytope is
\[
\PP_G
=
\conv\{\pm(e_i-e_j):\{i,j\}\in E(G)\},
\]
where $e_i$ is the $i$-th standard unit vector of $\RR^n$.
For a connected graph $G$, we regard $\PP_G$ as a full-dimensional
lattice polytope in
$H=\{x\in\RR^n:x_1+\cdots+x_n=0\}$, equipped with the lattice
$H\cap\ZZ^n$, and take its polar dual with respect to the dual lattice.
Konoike studied magic positivity of the polar duals associated with
cycles.  For the cycle $C_n$, he obtained an explicit Ehrhart formula
\cite[Proposition~4.6]{KonoikeCycles}, proved positivity of the first
three and the last three magic coefficients
\cite[Theorem~1.4]{KonoikeCycles}, and asked whether all magic
coefficients are nonnegative
\cite[Question~4.5]{KonoikeCycles}.  We answer this question in a
stronger form.

\begin{Theorem}\label{thm:cycles}
For every $n\ge3$, all magic coefficients of
$L_{\PP_{C_n}^\vee}(k)$ are strictly positive.
\end{Theorem}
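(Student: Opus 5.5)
The plan is to pass from the polytope to an explicit lattice-point count, compute the magic transform $M_L(y)$ in closed form, and then show that each coefficient of $y^i$ is a sum of nonnegative terms, at least one of which is positive.

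\textbf{Step 1: reduce to a constant-term count.} The dual lattice of $H\cap\ZZ^n$ is $\ZZ^n/\ZZ(1,\dots,1)$. For the cycle $C_n$ with edges $\{i,i+1\}$ (indices mod $n$), the dilate $k\PP_{C_n}^\vee$ consists of the classes $y$ with $|y_i-y_{i+1}|\le k$ for all $i$. Passing to the differences $d_i=y_{i+1}-y_i$ gives a bijection between its lattice points and tuples $(d_1,\dots,d_n)\in\{-k,\dots,k\}^n$ with $d_1+\cdots+d_n=0$. Hence
\[
L(k)=[x^0]\Bigl(\sum_{j=-k}^k x^j\Bigr)^n,
\]
which is a polynomial of degree $d=n-1$, in agreement with \cite[Proposition~4.6]{KonoikeCycles}.

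\textbf{Step 2: split each value range in a way adapted to the magic basis.} The basis $k^i(k+1)^{d-i}$ suggests writing
\[
\{-k,\dots,k\}=\{0,\dots,k\}\sqcup\{-k,\dots,-1\},
\]
a block of size $k+1$ and a block of size $k$. Let $S$ be the set of indices with $d_i<0$. Writing $d_i=-u_i$ with $u_i\in\{1,\dots,k\}$ for $i\in S$, and $d_i=v_i\in\{0,\dots,k\}$ for $i\notin S$, we get
\[
L(k)=\sum_{S\subseteq[n]}\#\Bigl\{\textstyle\sum_{i\in S}u_i=\sum_{i\notin S}v_i\Bigr\}.
\]
The aim is to reorganize this sum so that one coordinate is eliminated, namely the one forced by the linear relation. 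Each remaining coordinate should then contribute either a factor $k$ or a factor $k+1$, with controlled correction terms. Concretely, I would fix the cyclically last coordinate of the larger side and solve for it; the constraint that it lies in its range then produces truncation terms.

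\textbf{Step 3: compute and analyse $M_L$.} In parallel, I would substitute $k=y/(1-y)$, so that $k+1=1/(1-y)$, into the inclusion--exclusion formula
\[
L(k)=\sum_j(-1)^j\binom{n}{j}\binom{n(k+1)-j(2k+1)-1}{n-1},
\]
suitably interpreted, and compute $M_L(y)=(1-y)^{n-1}L\bigl(y/(1-y)\bigr)$. I expect an expression of the form $a_i=\sum_{m}c_{i,m}$, where the $c_{i,m}$ are products of binomial coefficients counting signed compositions, possibly after a Vandermonde-type identity.

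The final move is to find a sign-reversing involution, or a direct combinatorial model from Step 2, exhibiting $a_i$ as the cardinality of a nonempty set for every $0\le i\le n-1$. Sanity checks are available: $a_0=L(0)=1$, and $a_{n-1}$ equals the leading coefficient data. The results of \cite[Theorem~1.4]{KonoikeCycles} for the first three and last three coefficients give further checks.

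\textbf{Main obstacle.} The difficulty is the middle coefficients, where the inclusion--exclusion signs genuinely interact. What I would try first is a generating function in two variables: sum over $n$ as well, with marker $w^n$, so that $\sum_n M_{L_n}(y)w^n$ becomes a rational or algebraic function. From this closed form, coefficient positivity should be readable either via a continued-fraction expansion or via a manifestly positive expansion such as $\sum\binom{\cdot}{\cdot}^2$-type sums. Strictness would then follow by exhibiting one explicit object counted by each $a_i$.
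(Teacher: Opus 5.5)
\textbf{There is a genuine gap: the proposal sets up the right count but contains no argument for positivity.}

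Your Step~1 is correct; it is Konoike's identification of $\PP_{C_n}^\vee$ with $Q_{n-1}$. The inclusion--exclusion formula in Step~3 is exactly \eqref{eq:Q-ehrhart} once the sum is truncated at $j\le\lfloor(n-1)/2\rfloor$, and this is also where the paper starts. After this setup, however, the regrouping with ``controlled correction terms'', the sign-reversing involution, and the bivariate generating function are only described as things you would look for.

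We have $a_0=L(0)=1$ and $a_{n-1}=(-1)^{n-1}L(-1)=1$ by reciprocity. (Note that $a_{n-1}$ is not the leading coefficient of $L$; that is $\sum_i a_i$.) Konoike already had the first and last three coefficients. So the middle coefficients you call the main obstacle are the entire content of the theorem.

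Your intended final move also cannot work as stated, because the magic coefficients are not integers in general. For $n=4$,
\[
L(k)=\binom{4k+3}{3}-4\binom{2k+2}{3}=(2k+1)\Bigl(1+\tfrac83k(k+1)\Bigr)=(k+1)^3+\tfrac53k(k+1)^2+\tfrac53k^2(k+1)+k^3,
\]
so $a_1=a_2=\frac53$ is not the cardinality of any set. No unweighted count or sign-reversing involution can produce it, and a weighted interpretation is exactly what the paper leaves as an open question.

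The sign-pattern decomposition of Step~2 also fails termwise. For $n=3$:
\begin{itemize}
\item the class $S=\emptyset$ contributes $1=(k+1)^2-2k(k+1)+k^2$;
\item each class with $|S|=1$ contributes $\frac12k(k+3)=\frac32k(k+1)-k^2$;
\item each class with $|S|=2$ contributes $\frac12k(k-1)=-\frac12k(k+1)+k^2$.
\end{itemize}
Only the total $(k+1)^2+k(k+1)+k^2$ is magic positive. Eliminating one coordinate instead yields $(2k+1)^{n-1}$ minus two truncation caps. The correction then enters with a minus sign, and bounding it in the magic basis is the whole problem again.

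What is missing is a mechanism that controls this cancellation. The paper's route is as follows.
\begin{itemize}
\item Proposition~\ref{prop:Q-factorization} uses $\binom{X+m-\frac12}{2m}=\frac1{(2m)!}\prod_{a=0}^{m-1}(X^2-\lambda_a^2)$ with $X=\lambda_q(2k+1)$ (and its odd analogue). This rewrites your inclusion--exclusion sum as $L(k)=(2k+1)^\varepsilon C_{m,\varepsilon}(k(k+1))$, where $n-1=2m+\varepsilon$ and $\lambda_q=q+\frac{1+\varepsilon}2$.
\item Hence $M_L(y)=(1+y)^\varepsilon(1-y)^{2m}C_{m,\varepsilon}\bigl(y/(1-y)^2\bigr)$. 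By Proposition~\ref{prop:unified-local-expansion}, the coefficients of $C_{m,\varepsilon}$ are positive multiples of $\DD[\lambda_m^2,(\lambda_j^2)_{j\in J}]t^{|J|-1/2}$.
\item The factors $(1-y)^{2m-2|J|}$ reintroduce alternating signs, so each magic coefficient is split into local terms $\tau_{\mathbf x}(I)$. Each $\tau_{\mathbf x}(I)$ is a single divided difference of a polynomial over $\sqrt t$.
\item Krawtchouk orthogonality (Lemma~\ref{lem:finite-symmetrization}) writes that polynomial as a positive combination of $\prod_{b\in L}(t-z_b)\prod_{b\notin L}(t+z_b)$. The nodes $z_b$ with $b\in L$ are then removed.
\item The remaining divided differences are positive by Lemma~\ref{lem:dd-positive}. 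Its proof uses $t^{-1/2}=\frac2\pi\int_0^\infty dv/(t+v^2)$ and the substitution $v\mapsto x_0^2/v$, which relies on $X^2$ being the largest node.
\end{itemize}
Nothing in your proposal plays the role of these steps. As it stands it establishes only the known lattice-point formula, not the positivity.
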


\begin{Corollary}\label{cor:cycle-consequences}
For every $n\ge3$, the polytope $\PP_{C_n}^\vee$ is Ehrhart positive,
and its $h^*$-polynomial is real-rooted and $\gamma$-positive.
\end{Corollary}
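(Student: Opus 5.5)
The statement to prove is Corollary~\ref{cor:cycle-consequences}. I will derive it from Theorem~\ref{thm:cycles} in the same way Corollary~\ref{cor:fano-consequences} follows from Theorem~\ref{thm:pseudo-symmetric-main}. Write $Q=\PP_{C_n}^\vee$, a lattice polytope of dimension $d=n-1$ in $H$. The first task is to confirm that $Q$ is a lattice polytope and that it is reflexive.

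For $C_n$, the symmetric edge polytope $\PP_{C_n}$ is reflexive in $H$ with respect to the lattice $H\cap\ZZ^n$. This is the standard fact from \cite{MatsuiEtAlGraphs}: it contains the origin in its interior and every facet has the form $\{x:\langle u,x\rangle=1\}$ with $u$ integral, taken modulo $(1,\dots,1)$. Hence its polar dual $Q$ is again a reflexive lattice polytope. In particular $Q$ is Gorenstein, so $h^*(Q;t)$ is palindromic of degree $d$ and its $\gamma$-expansion is defined.

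Next, by Theorem~\ref{thm:cycles}, every magic coefficient $a_i$ of $L_Q(k)=\sum_{i=0}^d a_ik^i(k+1)^{d-i}$ is positive. Expanding each term $k^i(k+1)^{d-i}$ in the monomial basis gives only nonnegative coefficients. Moreover, the coefficient of $k^j$ in $k^j(k+1)^{d-j}$ is $1$. Therefore the coefficient of $k^j$ in $L_Q(k)$ is at least $a_j>0$, and $Q$ is Ehrhart positive. This step actually needs only $a_j>0$ for all $j$, which the theorem provides, rather than mere nonnegativity of the $a_i$ together with $a_0=1$.

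For real-rootedness, I would invoke Br\"and\'en's result as quoted in the introduction \cite{BrandenPF}, \cite[Theorem~4.19]{FerroniHigashitani}. It says that if $L_P$ is magic positive, then $h^*(P;t)$ is real-rooted. The relevant mechanism is the linear map $M_L(y)\mapsto h^*$, which sends polynomials with nonnegative coefficients to polynomials whose roots are real and nonpositive. Finally, $h^*(Q;t)$ is palindromic with nonnegative coefficients and only real, nonpositive roots. A palindromic real-rooted polynomial with nonnegative coefficients is $\gamma$-positive: its roots pair as $r$ and $1/r$ with $r<0$, and each pair contributes a factor $t^2+(-r-1/r)t+1=(1+t)^2+ct$ with $c=-r-1/r-2\ge0$. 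Multiplying these factors together, along with factors $(1+t)$ for roots $r=-1$, gives nonnegative $\gamma_i$.

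The main obstacle is justifying reflexivity of $Q$, specifically that it is a lattice polytope. I would cite reflexivity of symmetric edge polytopes of connected graphs. Alternatively, for the cycle, I would use Konoike's explicit description of $Q$ from \cite{KonoikeCycles}, which gives its lattice vertices directly.
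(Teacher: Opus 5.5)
Your proposal is correct and follows the paper's own route: reflexivity of $\PP_{C_n}^\vee$ together with Theorem~\ref{thm:cycles} gives Ehrhart positivity and, via Br\"and\'en, real-rootedness; real-rootedness plus palindromicity then gives $\gamma$-positivity, and you spell out these steps in more detail than the paper does. One small correction to your aside: nonnegativity of the $a_i$ together with $a_0=L_Q(0)=1$ already suffices for Ehrhart positivity, because the coefficient of $k^j$ is then at least $\binom{d}{j}$; this is the argument the paper uses, and it is what Corollary~\ref{cor:fano-consequences} needs, since there only nonnegativity is available.
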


The strict positivity results for the del Pezzo and cycle cases are
based on a positivity result for divided differences, while the
pseudo-del Pezzo case follows from an averaging identity.
The general pseudo-symmetric simplicial reflexive case is then obtained
from Nill's classification and the binary-code description of centrally
symmetric reflexive crosspolytopes, while Ewald's classification yields
strict positivity in the smooth Fano case.

The paper is organized as follows.  Section~\ref{sec:closure} collects
the preliminaries and introduces the family of polytopes underlying
the proofs.  Section~\ref{sec:dd} recalls the divided-difference
identities used in the proof.  Section~\ref{sec:local-expansion} develops
the local decomposition of the Ehrhart and magic coefficients.
Section~\ref{sec:plus} establishes the positivity of the local terms.
Finally, Section~\ref{sec:applications} proves the main result for
pseudo-symmetric simplicial reflexive polytopes and applies these results
to polar duals of symmetric edge polytopes of cycles.

\subsection*{Acknowledgment}
This work was supported by JSPS KAKENHI Grant Numbers
JP22K13890, JP24K00534, and JP26K00618.

\section{Preliminaries and reduction}\label{sec:closure}

We refer to \cite{CoxLittleSchenck} for background on lattice polytopes
and toric varieties.  We first recall the terminology used throughout
the paper. Let $M\simeq
\ZZ^d$ be a lattice, let $N=\operatorname{Hom}(M,\ZZ)$ be its dual lattice,
and write $M_{\RR}=M\otimes_{\ZZ}\RR$ and $N_{\RR}=N\otimes_{\ZZ}\RR$.
For a full-dimensional polytope $P\subset M_{\RR}$ containing the origin in
its interior, its \emph{polar dual} is
\[
P^\vee
=
\{y\in N_{\RR}:\langle y,x\rangle\le1\text{ for every }x\in P\}.
\]
A lattice polytope $P\subset M_{\RR}$ is called \emph{reflexive} if the
origin lies in its interior and $P^\vee$ is again a lattice polytope.
A full-dimensional lattice polytope containing the origin in its
interior is called \emph{pseudo-symmetric} if it has a pair of
opposite facets.
A lattice polytope $P$ is called \emph{Gorenstein} if $rP-m$ is reflexive
for some positive integer $r$ and some lattice point $m\in M$.
The integer $r$ is called the \emph{index} of $P$.
A lattice polytope is Gorenstein if and only if its $h^*$-polynomial is
palindromic \cite{HibiDual,FerroniHigashitani}.

A $d$-dimensional lattice polytope $P\subset M_{\RR}$ is called
\emph{smooth} if, at every vertex $v$ of $P$, the primitive lattice vectors
along the $d$ edges emanating from $v$ form a basis of $M$.  A
full-dimensional lattice polytope $R\subset N_{\RR}$ containing the origin
in its interior is called a \emph{Fano polytope} if all its vertices are
primitive lattice points.  A Fano polytope is called \emph{smooth Fano} if
the vertices of every facet form a basis of $N$. 
We use the conventions of
\cite{Ewald,EwaldBook,HaaseThesis}.

A lattice polytope is smooth if and only if the projective toric
variety defined by its normal fan is smooth
\cite[Theorem~2.4.3]{CoxLittleSchenck}.
Reflexive polytopes are the anticanonical polytopes of Gorenstein
toric Fano varieties
\cite[Theorem~8.3.4]{CoxLittleSchenck}.

Polar duality identifies smooth Fano polytopes with smooth reflexive
polytopes.
Indeed, if the vertices $v_1,\ldots,v_d$ of a facet of $R$ form a
lattice basis, then the corresponding vertex of $R^\vee$ is
$v_1^*+\cdots+v_d^*$ and its primitive edge directions are
$-v_1^*,\ldots,-v_d^*$, where $v_1^*,\ldots,v_d^*$ is the dual basis.
The converse follows by considering the primitive facet normals
of a smooth reflexive polytope.

For full-dimensional polytopes $P\subset\RR^d$ and $Q\subset\RR^e$
containing the origin in their interiors, define their \emph{free sum} by
\[
P\oplus Q
=
\conv\bigl((P\times\{0\})\cup(\{0\}\times Q)\bigr).
\]
We will use the following elementary properties.

\begin{Lemma}\label{lem:closure}
Let $P$ and $Q$ be lattice polytopes.
\begin{enumerate}
\item If $P$ and $Q$ contain the origin in their interiors, then
\[
(P\oplus Q)^\vee=P^\vee\times Q^\vee.
\]
\item Their Ehrhart polynomials satisfy
\[
L_{P\times Q}(k)=L_P(k)L_Q(k).
\]
\item If $L_P(k)$ and $L_Q(k)$ are magic positive, then so is
$L_{P\times Q}(k)$.
If all magic coefficients of both factors are strictly positive,
then the same holds for their product.
\end{enumerate}
\end{Lemma}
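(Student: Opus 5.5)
The three parts will be proved in order, each by a direct computation from the definitions.

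For (1), I will compute the polar of the free sum directly. A point $(y,z)\in N_\RR\times N'_\RR$ lies in $(P\oplus Q)^\vee$ if and only if $\langle (y,z),w\rangle\le1$ for every vertex $w$ of $P\oplus Q$. Those vertices are the points $(v,0)$ and $(0,u)$ with $v$ a vertex of $P$ and $u$ a vertex of $Q$. Indeed, $P\oplus Q$ is the convex hull of these points, and since the origin is interior, no one of them lies in the convex hull of the others; in fact we only need that the set generates the convex hull. The conditions therefore split into $\langle y,v\rangle\le1$ for all $v\in P$ and $\langle z,u\rangle\le1$ for all $u\in Q$. These say exactly $y\in P^\vee$ and $z\in Q^\vee$, giving $(P\oplus Q)^\vee=P^\vee\times Q^\vee$.

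For (2), I will note that $k(P\times Q)=kP\times kQ$, and that $(kP\times kQ)\cap(\ZZ^d\times\ZZ^e)=(kP\cap\ZZ^d)\times(kQ\cap\ZZ^e)$. Counting gives $L_{P\times Q}(k)=L_P(k)L_Q(k)$.

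For (3), the key observation is that the magic transform is multiplicative. Let $\deg L_P=d$ and $\deg L_Q=e$, so that $\deg L_PL_Q=d+e$ (leading coefficients are volumes, hence positive). Then
\[
M_{L_PL_Q}(y)=(1-y)^{d+e}L_P\!\left(\tfrac{y}{1-y}\right)L_Q\!\left(\tfrac{y}{1-y}\right)=M_{L_P}(y)\,M_{L_Q}(y).
\]
Equivalently, in the basis itself,
\[
k^i(k+1)^{d-i}\cdot k^j(k+1)^{e-j}=k^{i+j}(k+1)^{d+e-i-j}.
\]
Hence the magic coefficients of the product are the convolutions $c_m=\sum_{i+j=m}a_ib_j$. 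Nonnegativity is preserved. If all $a_i>0$ for $0\le i\le d$ and all $b_j>0$ for $0\le j\le e$, then for each $0\le m\le d+e$ the sum $c_m$ contains at least one pair with $0\le i\le d$ and $0\le j\le e$, so $c_m>0$.

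The only point requiring care is the degree bookkeeping in the magic transform, namely using the exact degrees $d$ and $e$ so that the expansions are in the correct bases. There is no real obstacle here.
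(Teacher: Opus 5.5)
Your proof is correct. The paper states Lemma~\ref{lem:closure} without proof, calling these ``elementary properties'', and your three steps are the standard verification it leaves implicit: testing the polar inequalities on the generating set $(P\times\{0\})\cup(\{0\}\times Q)$, factoring the lattice points of $kP\times kQ$, and using multiplicativity of the magic transform via $k^i(k+1)^{d-i}\cdot k^j(k+1)^{e-j}=k^{i+j}(k+1)^{d+e-i-j}$. The paper itself uses that multiplicativity for the factor $(2k+1)^\varepsilon$ in the proof of Theorem~\ref{thm:Q-magic}.

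One cosmetic point: in the applications the relevant lattices are dual lattices (e.g.\ $\Lambda^\vee$ for the crosspolytope factor) rather than $\ZZ^d$, which changes nothing once bases are chosen.
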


For $m\ge1$, let
\[
V_{2m}
=
\conv\left(
\pm e_1,\ldots,\pm e_{2m},
\pm(e_1+\cdots+e_{2m})
\right)
\]
be the del Pezzo polytope, and let
\[
\widetilde V_{2m}
=
\conv\left(
\pm e_1,\ldots,\pm e_{2m},
-(e_1+\cdots+e_{2m})
\right)
\]
be the pseudo-del Pezzo polytope. 

We use Nill's classification of pseudo-symmetric simplicial reflexive
polytopes \cite[Theorem~0.1]{NillPseudoSymmetric}.

\begin{Proposition}[Nill]\label{prop:classification}
Every pseudo-symmetric simplicial reflexive polytope is lattice
equivalent to a free sum
\[
R
\oplus
V_{2m_1}\oplus\cdots\oplus V_{2m_r}
\oplus
\widetilde V_{2n_1}\oplus\cdots\oplus\widetilde V_{2n_s},
\]
where $r,s\ge0$, all $m_i,n_j\ge1$, and $R$ is a centrally symmetric
reflexive crosspolytope.  The factor $R$ may be omitted.
\end{Proposition}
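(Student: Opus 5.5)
Since this is Nill's theorem, the paper's own treatment will be a citation to \cite[Theorem~0.1]{NillPseudoSymmetric}. If I had to reprove it, I would argue as follows, and the second step below is where I expect the real work to be.

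\textbf{Setup.} Let $P\subset N_{\RR}$ be simplicial and reflexive, with opposite facets $F$ and $-F$. By reflexivity there is $u\in M$ with
\[
F=P\cap\{\langle u,\cdot\rangle=1\},\qquad -F=P\cap\{\langle u,\cdot\rangle=-1\}.
\]
Hence every vertex $x$ of $P$ satisfies $-1\le\langle u,x\rangle\le1$. Since $P$ is simplicial, $F$ has exactly $d$ vertices $v_1,\dots,v_d$, and they form an $\RR$-basis; the vertices of $-F$ are $-v_1,\dots,-v_d$. Each remaining vertex $w$ of $P$ can be written as $w=\sum_i c_i v_i$ with $\sum_i c_i=\langle u,w\rangle\in\{-1,0,1\}$. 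The aim is to pin down the possible coefficient vectors $c$.

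\textbf{Constraining the extra vertices.} For every facet $G$ of $P$ through some of the $\pm v_i$, the primitive normal $u_G$ takes the value $1$ on $G$ and at most $1$ on all vertices. I would look at the facets adjacent to $F$ and $-F$, obtained by swapping a single vertex $v_j$ for an extra vertex $w$ or for $-v_k$. Integrality of $u_G$ on the lattice generated by the vertices, together with $\langle u_G,\pm v_i\rangle\le1$, should force each $c_i$ into $\{-1,0,1\}$. I would then show that an extra vertex has all its nonzero $c_i$ of a single sign, and so has the form $\pm\sum_{i\in S}v_i$ with $\sum c_i\in\{-1,0,1\}$. Simpliciality is what rules out mixed-sign combinations: a mixed-sign vertex would make some facet non-simplicial or push it off height $1$. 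This is the main obstacle. The fragile point is controlling facets not adjacent to $F$, where I expect to need an inductive walk along facet-ridges starting from $F$.

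\textbf{Decomposition and assembly.} Define a graph on $[d]$ in which $i\sim j$ whenever some extra vertex involves both $v_i$ and $v_j$. I then claim the supports $S$ of the extra vertices are pairwise disjoint, again by simpliciality: two overlapping supports would produce a non-simplicial face. The connected components of this graph then give a free-sum decomposition. On a component $S$ carrying extra vertices, the constraint $\sum c_i\in\{-1,0,1\}$ forces $|S|$ even, say $|S|=2m$ (I would check the odd case yields no valid extra vertex). If both $\pm\sum_{i\in S}v_i$ occur, the block is the del Pezzo polytope $V_{2m}$; if only one sign occurs, it is the pseudo-del Pezzo polytope $\widetilde V_{2m}$. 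A unimodularity check on these blocks should show their $v_i$ can be taken to be lattice basis vectors. The union of the components without extra vertices spans a block whose only vertices are $\pm v_i$, which is a centrally symmetric simplicial reflexive crosspolytope $R$. Its vertices need not form a lattice basis, which is exactly why $R$ is only a centrally symmetric reflexive crosspolytope rather than a standard one. Finally, I would verify that the lattice splits compatibly as a direct sum over the blocks, using that the $V_{2m}$ and $\widetilde V_{2m}$ blocks are unimodular.
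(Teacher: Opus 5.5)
You are right that the paper does not reprove this statement; it simply cites \cite[Theorem~0.1]{NillPseudoSymmetric}, so as a citation your proposal matches. The reproof you sketch, however, rests on a normal form that is false. In the basis $v_1,\dots,v_d$ of vertices of $F$, an extra vertex is \emph{not} of the single-signed form $\pm\sum_{i\in S}v_i$. Already in the hexagon $V_2$, the edge $F=[e_1,e_1+e_2]$ (normal $u=(1,0)$) is a facet opposite to $-F$. With $v_1=e_1$, $v_2=e_1+e_2$, the extra vertices are $\pm e_2=\pm(v_2-v_1)$. Likewise, for $V_{2m}$ and its facet $F=\conv(e_1,\dots,e_m,-e_{m+1},\dots,-e_{2m})$, the extra vertices $\pm(e_1+\cdots+e_{2m})$ become $\pm(v_1+\cdots+v_m-v_{m+1}-\cdots-v_{2m})$.

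Your own setup in fact excludes the shape you propose. A vertex $w\notin F\cup(-F)$ cannot have $\langle u,w\rangle=\pm1$, since it would then lie in the simplex $\pm F$ and hence be one of the $\pm v_i$. So $\sum_i c_i=\langle u,w\rangle=0$, which is impossible for $w=\pm\sum_{i\in S}v_i$ with $S\neq\emptyset$. Even your weaker constraint $\sum_i c_i\in\{-1,0,1\}$ would force $|S|\le1$ for this shape, not $|S|$ even. You have conflated the standard coordinates of $V_{2m}$ and $\widetilde V_{2m}$, in which $e_1,\dots,e_{2m}$ do not span a facet, with coordinates adapted to the opposite pair $F,-F$. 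Consequently the mechanism you propose for the main step, that simpliciality rules out mixed-sign vertices, cannot work: mixed signs are forced.

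What actually has to be shown is the following. Every extra vertex has the form $w=\sum_{i\in S_+}v_i-\sum_{i\in S_-}v_i$ with $S_+\cap S_-=\emptyset$ and $|S_+|=|S_-|$. Moreover, extra vertices $w\neq\pm w'$ have disjoint supports $S_+\cup S_-$. With these supports your graph and component bookkeeping can be kept. Evenness of a block then comes from the balancing $|S_+|=|S_-|$, i.e.\ from $\langle u,w\rangle=0$. Only after replacing $v_i$ by $-v_i$ for $i\in S_-$ does the block acquire the standard shape of $V_{2m}$ (if $\pm w$ both occur) or $\widetilde V_{2m}$ (if only one of them does).

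The lattice splitting also needs a genuine argument from reflexivity, not just an appeal to unimodularity of the blocks. For example, differences of vertices of $V_{2m}^\vee$ and of $\widetilde V_{2m}^\vee$ already generate $\ZZ^{2m}$. Since $P^\vee$ is the product of the block duals, this forces the block components of every lattice point to be integral. By contrast, differences of vertices of $[-1,1]^d$ generate only $2\ZZ^d$, which is why a half-integral lattice can survive on the crosspolytope block.
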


For the smooth Fano subclass, Ewald's classification gives the following
more restrictive decomposition \cite{Ewald,EwaldBook};
see also \cite[Theorem~IV.2.1]{HaaseThesis}.

\begin{Proposition}[Ewald]\label{prop:smooth-classification}
Every pseudo-symmetric smooth Fano polytope is lattice equivalent to a
free sum of copies of $[-1,1]$, del Pezzo polytopes, and pseudo-del Pezzo
polytopes.
\end{Proposition}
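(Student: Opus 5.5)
The plan is to derive this from Nill's classification (Proposition~\ref{prop:classification}) rather than reprove Ewald's original argument. We show that smoothness is preserved when one passes to the free-sum factors, and that a smooth centrally symmetric reflexive crosspolytope is the standard one.

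First, a pseudo-symmetric smooth Fano polytope $P$ is a pseudo-symmetric simplicial reflexive polytope, so Nill's classification applies.
\begin{itemize}
\item $P$ is simplicial because the vertices of each facet form a basis, so each facet has exactly $d$ vertices.
\item $P$ is reflexive because each facet $\conv(v_1,\ldots,v_d)$ lies in the hyperplane $\langle v_1^*+\cdots+v_d^*,\,\cdot\,\rangle=1$. Here $v_1^*,\ldots,v_d^*$ is the dual basis, so $v_1^*+\cdots+v_d^*$ is a lattice point and every facet has lattice distance $1$ from the origin.
\end{itemize}
Hence $P$ is lattice equivalent to $R\oplus V_{2m_1}\oplus\cdots\oplus\widetilde V_{2n_s}$.

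Next, I would show that a free sum $P_1\oplus P_2$ of simplicial polytopes containing the origin in their interiors is smooth Fano if and only if both $P_1$ and $P_2$ are. The facets of $P_1\oplus P_2$ are exactly the joins $\conv(F_1\cup F_2)$ of facets $F_i$ of $P_i$, placed in complementary coordinate subspaces. The vertex set of such a join is a basis of $\ZZ^{d_1}\oplus\ZZ^{d_2}$ if and only if the vertex set of each $F_i$ is a basis of its own summand. Applying this repeatedly, each Nill factor of $P$ is smooth Fano.

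Finally, I treat the factor $R$. Since $R$ is a centrally symmetric crosspolytope, its vertices are $\pm w_1,\ldots,\pm w_e$ for linearly independent $w_i$. Its facets are the simplices $\conv(\epsilon_1w_1,\ldots,\epsilon_ew_e)$ with $\epsilon_i\in\{\pm1\}$. Smoothness of any single facet says that $w_1,\ldots,w_e$ is a lattice basis. The unimodular map $w_i\mapsto e_i$ then carries $R$ to the standard crosspolytope $\conv(\pm e_1,\ldots,\pm e_e)$, which is the free sum of $e$ copies of $[-1,1]$. The del Pezzo and pseudo-del Pezzo factors are kept as they are, which yields the claimed decomposition.

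The step most likely to need care is the description of the facets of a free sum, which is what makes smoothness split across the factors. I would prove it by writing $(P_1\oplus P_2)^\vee=P_1^\vee\times P_2^\vee$ (Lemma~\ref{lem:closure}). Vertices of the product are pairs of vertices of $P_1^\vee$ and $P_2^\vee$, and these correspond exactly to pairs of facets $(F_1,F_2)$.

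For completeness, one can also check directly that $V_{2m}$ and $\widetilde V_{2m}$ are smooth Fano (Ewald), though the statement does not require it. A facet of $V_{2m}$ or $\widetilde V_{2m}$ uses $2m$ of the vertices $\pm e_i$ and $\pm(e_1+\cdots+e_{2m})$. Any $2m$ of these vectors that span $\RR^{2m}$ form a unimodular set, since omitting one $e_i$ and adding $\pm\sum_j e_j$ gives a matrix of determinant $\pm1$.
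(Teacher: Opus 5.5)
Your argument is correct, but it takes a different route from the paper. The paper gives no proof of this statement; it imports Ewald's classification directly, citing Ewald's paper and book and Haase's thesis, Theorem~IV.2.1.

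You instead derive it as a corollary of Nill's classification (Proposition~\ref{prop:classification}), which the paper already needs for the general simplicial case. You add two elementary observations:
\begin{enumerate}
\item Smoothness descends to the free-sum factors. The facets of $P_1\oplus P_2$ are the joins $\conv(F_1\cup F_2)$, and their vertex matrices are block diagonal.
\item A smooth centrally symmetric crosspolytope is unimodularly equivalent to the standard one, that is, to a free sum of copies of $[-1,1]$.
\end{enumerate}

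This reduces the paper's external input for both main statements to Nill's theorem alone. In the language of Section~\ref{sec:crosspolytope}, your last step says that smoothness of the $e$-dimensional factor $R$ forces $\Lambda=\ZZ^e$. The code $C$ is then trivial and $A_i^\perp=\binom{e}{i}>0$, which explains directly why this factor has strictly positive magic coefficients in the smooth case. The paper's citation, by contrast, rests on Ewald's older and more direct result rather than on its later generalization.

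One point you should make explicit. Your block-determinant step uses that Nill's free sum is taken with respect to a direct-sum splitting $N=N_R\oplus N_1\oplus\cdots$ of the lattice. Here $N_R$ is the ambient lattice of $R$, not the sublattice spanned by its vertices. This is how Nill's theorem is formulated, and it is also what the paper uses when it turns the free sum into a product of polar duals.
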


The interval $[-1,1]$ is self-dual and has Ehrhart polynomial
\[
2k+1=k+(k+1),
\]
which is magic positive.

\subsection{Centrally symmetric reflexive crosspolytopes}
\label{sec:crosspolytope}
Let
\[
R=\conv(\pm v_1,\ldots,\pm v_d)
\]
be a $d$-dimensional centrally symmetric reflexive crosspolytope
in a lattice $N$.  Since $v_1,\ldots,v_d$ form an $\RR$-basis of
$N_{\RR}$, let
\[
\varphi:N_{\RR}\longrightarrow\RR^d
\]
be the linear isomorphism defined by $\varphi(v_i)=e_i$, and put
\[
\Lambda:=\varphi(N).
\]
Then
\[
\ZZ^d\subseteq\Lambda\subseteq\RR^d,
\qquad
R=\conv(\pm e_1,\ldots,\pm e_d).
\]
Thus
\[
R^\vee=[-1,1]^d
\]
with respect to the dual lattice $\Lambda^\vee$.

Since $R$ is reflexive, every vertex of $R^\vee$, that is, every
sign vector $\varepsilon\in\{\pm1\}^d$, belongs to $\Lambda^\vee$.
Hence, for every $x=(x_1,\ldots,x_d)\in\Lambda$,
\[
\varepsilon_1x_1+\cdots+\varepsilon_dx_d\in\ZZ
\qquad
(\varepsilon\in\{\pm1\}^d).
\]
Comparing two sign vectors which differ only in the $i$-th coordinate
gives
\[
2x_i\in\ZZ
\qquad(1\le i\le d),
\]
while taking $\varepsilon=(1,\ldots,1)$ gives
\[
x_1+\cdots+x_d\in\ZZ.
\]
In particular,
\[
\ZZ^d\subseteq\Lambda\subseteq\frac12\ZZ^d.
\]

Define
\[
\pi:\Lambda\longrightarrow\mathbb F_2^d,
\qquad
\pi(x)
=
(2x_1,\ldots,2x_d)\pmod2.
\]
Its kernel is $\ZZ^d$, and hence
\[
C:=\pi(\Lambda)\cong\Lambda/\ZZ^d
\]
is a binary linear code.
For $c\in\mathbb F_2^d$, let $\operatorname{wt}(c)$ denote its
Hamming weight.
Since $x_1+\cdots+x_d\in\ZZ$ for every $x\in\Lambda$,
every codeword of $C$ has even Hamming weight.
Thus $C$ is an even binary linear code.

Let
\[
D:=C^\perp
=
\left\{
b\in\mathbb F_2^d:
\sum_{j=1}^d b_jc_j=0 \text{ in }\mathbb F_2
\text{ for every }c\in C
\right\}
\]
be the dual code of $C$.
For $0\le i\le d$, let
\[
A_i^\perp
=
\#\{b\in D:\operatorname{wt}(b)=i\}.
\]

\begin{Proposition}\label{prop:crosspolytope-magic}
With the notation above,
\[
L_{R^\vee}(k)
=
\sum_{i=0}^d
A_i^\perp k^i(k+1)^{d-i}.
\]
In particular, $L_{R^\vee}(k)$ is magic positive.
\end{Proposition}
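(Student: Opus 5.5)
We need to count lattice points of $k[-1,1]^d$ in the lattice $\Lambda^\vee$. First identify $\Lambda^\vee$. Since $\ZZ^d\subseteq\Lambda\subseteq\frac12\ZZ^d$, we get $2\ZZ^d\subseteq\Lambda^\vee\subseteq\ZZ^d$. A point $y\in\ZZ^d$ lies in $\Lambda^\vee$ if and only if $\langle y,x\rangle\in\ZZ$ for all $x\in\Lambda$, that is, $\sum_j y_j(2x_j)\equiv 0\pmod 2$ for all $x\in\Lambda$. Equivalently, $y\bmod 2\in C^\perp=D$. Hence
\[
\Lambda^\vee=\{y\in\ZZ^d: (y\bmod 2)\in D\},
\]
and $L_{R^\vee}(k)=\#\{y\in\ZZ^d\cap[-k,k]^d: y\bmod 2\in D\}$.

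Next, count by parity pattern. For $b\in D$, the points with $y\bmod2=b$ factor coordinatewise. The number of integers in $[-k,k]$ with prescribed parity is $k+1$ for even parity (namely $-k,\dots,k$ of the same parity as $k$, or the other class) and $k$ for the other. Concretely, the even integers in $[-k,k]$ number $2\lfloor k/2\rfloor+1$, which depends on the parity of $k$. This is the main obstacle: the count is only quasi-polynomial unless we organize it correctly.

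To resolve it, the parity of $k$ must be absorbed. Since $C$ is even, $D$ contains the all-ones vector $\mathbf 1$, because $\sum_j c_j=\operatorname{wt}(c)\equiv0$. So $D$ is closed under complementation $b\mapsto b+\mathbf 1$. Among the $2k+1$ integers in $[-k,k]$, those $\equiv k\pmod 2$ number $k+1$ and the others number $k$. Then
\[
\#\{y\in[-k,k]^d\cap\ZZ^d: y\equiv b\}=(k+1)^{\#\{j:b_j\equiv k\}}\,k^{\#\{j:b_j\not\equiv k\}}.
\]
For $k$ even this equals $k^{\operatorname{wt}(b)}(k+1)^{d-\operatorname{wt}(b)}$. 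For $k$ odd it equals $k^{d-\operatorname{wt}(b)}(k+1)^{\operatorname{wt}(b)}$, the same as the even-case term for $b+\mathbf 1$.

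Summing over $b\in D$ and using the bijection $b\mapsto b+\mathbf 1$ of $D$ in the odd case, both parities give
\[
L_{R^\vee}(k)=\sum_{b\in D}k^{\operatorname{wt}(b)}(k+1)^{d-\operatorname{wt}(b)}=\sum_{i=0}^d A_i^\perp k^i(k+1)^{d-i}.
\]
Since this expression is already in the magic basis, uniqueness of the expansion identifies the magic coefficients as $A_i^\perp\ge0$. This gives magic positivity.
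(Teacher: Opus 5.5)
Your proof is correct and is essentially the paper's argument: the same description $\Lambda^\vee=\{z\in\ZZ^d: z\bmod 2\in D\}$, followed by the same coordinatewise parity count. The one difference is how odd $k$ is handled. The paper counts only for even $k$ and extends to all $k$ because both sides are polynomials. You instead treat odd $k$ directly via the involution $b\mapsto b+\mathbf 1$ of $D$, which exists because $C$ is even; this avoids appealing to Ehrhart polynomiality and uses the same reflexivity input, namely that the sign vectors lie in $\Lambda^\vee$.
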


\begin{proof}
Since $\ZZ^d\subseteq\Lambda$, we have
\[
\Lambda^\vee\subseteq\ZZ^d.
\]
We first describe which elements of $\ZZ^d$ belong to $\Lambda^\vee$.

Let $z\in\ZZ^d$.  For each $x\in\Lambda$, let
$c\in C\subseteq\mathbb F_2^d$ be the image of $x$ under the map
\[
\pi:\Lambda\longrightarrow\mathbb F_2^d,
\qquad
\pi(x)=(2x_1,\ldots,2x_d)\pmod 2,
\]
and identify $c$ with its representative in $\{0,1\}^d$.
Then
\[
x=n+\frac12c
\]
for some $n\in\ZZ^d$.  Hence
\[
\langle z,x\rangle
=
z\cdot n+\frac12 z\cdot c.
\]
Since $z\cdot n\in\ZZ$, the condition
$\langle z,x\rangle\in\ZZ$ for every $x\in\Lambda$ is equivalent to
\[
z\cdot c\equiv0\pmod 2
\qquad
\text{for every }c\in C.
\]
Equivalently,
\[
(z\bmod 2)\cdot c=0
\qquad
\text{in }\mathbb F_2
\]
for every $c\in C$.  Thus
\[
\Lambda^\vee
=
\{z\in\ZZ^d:z\bmod2\in C^\perp\}
=
\{z\in\ZZ^d:z\bmod2\in D\}.
\]

Now let $k$ be a nonnegative even integer.  Since
\[
R^\vee=[-1,1]^d,
\]
we have
\[
kR^\vee\cap\Lambda^\vee
=
\{z\in[-k,k]^d\cap\ZZ^d:z\bmod2\in D\}.
\]
The interval $[-k,k]\cap\ZZ$ contains $k+1$ even integers and $k$
odd integers.  Therefore, for a fixed $b\in D$ with
$\operatorname{wt}(b)=i$, the number of $z\in[-k,k]^d\cap\ZZ^d$
satisfying
\[
z\bmod2=b
\]
is
\[
k^i(k+1)^{d-i}.
\]
Summing over all elements of $D$, we obtain
\[
L_{R^\vee}(k)
=
\sum_{i=0}^d
A_i^\perp k^i(k+1)^{d-i}
\]
for every nonnegative even integer $k$.

Both sides are polynomials in $k$, and they agree for infinitely many
values of $k$.  Hence the identity holds for all $k$.  Since
$A_i^\perp\ge0$ for every $i$, these are precisely the magic
coefficients of $L_{R^\vee}(k)$, and therefore
$L_{R^\vee}(k)$ is magic positive.
\end{proof}

\subsection{The slab polytopes \texorpdfstring{$Q_d$}{Q d}}\label{sec:slab}

For $d\ge2$, put
\[
Q_d
=
\left\{
x\in[-1,1]^d:
-1\le x_1+\cdots+x_d\le1
\right\}.
\]
The vertex description of $V_{2m}$ immediately gives
\[
Q_{2m}=V_{2m}^\vee.
\]
Konoike showed that $Q_{n-1}$ is lattice equivalent to
$\PP_{C_n}^\vee$ and obtained the following Ehrhart formula
\cite[Proposition~4.6 and its proof]{KonoikeCycles}:
\begin{equation}\label{eq:Q-ehrhart}
L_{Q_d}(k)
=
\sum_{j=0}^{\lfloor d/2\rfloor}
(-1)^j\binom{d+1}{j}
\binom{(d+1-2j)k+d-j}{d}.
\end{equation}
For even $d$, the same formula is also contained in Haase's explicit
Ehrhart formulas
\cite[Proposition~IV.2.7]{HaaseThesis}.

We now rewrite \eqref{eq:Q-ehrhart} in a form adapted to magic positivity.
Let
\[
d=2m+\varepsilon,
\qquad
\varepsilon\in\{0,1\},
\qquad
\alpha=\frac{1+\varepsilon}{2},
\]
and set
\[
\lambda_q=q+\alpha
\qquad(0\le q\le m).
\]

\begin{Proposition}\label{prop:Q-factorization}
There is a polynomial $C_{m,\varepsilon}(u)$ of degree $m$ such that
\[
L_{Q_{2m+\varepsilon}}(k)
=
(2k+1)^\varepsilon
C_{m,\varepsilon}(k(k+1)).
\]
More precisely,
\begin{equation}\label{eq:C-product}
\begin{aligned}
C_{m,\varepsilon}(u)
&=
\frac1{(2m+\varepsilon)!}
\sum_{q=0}^{m}
(-1)^{m-q}
\binom{2m+\varepsilon+1}{m-q}\lambda_q^\varepsilon
\prod_{a=0}^{m-1}
\left(
4\lambda_q^2u+\lambda_q^2-\lambda_a^2
\right).
\end{aligned}
\end{equation}
\end{Proposition}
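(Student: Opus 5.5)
The plan is to substitute the summation index $j=m-q$ in \eqref{eq:Q-ehrhart} and to recenter each binomial coefficient at the midpoint of its linear factors. The product then becomes symmetric under $x\mapsto -x$, and hence a polynomial in $(2k+1)^2=4k(k+1)+1$. Throughout, write $d=2m+\varepsilon$.

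\emph{Step 1: reindexing.} With $j=m-q$, where $0\le q\le m$, we have
\[
d+1-2j=2q+1+\varepsilon=2\lambda_q,\qquad d-j=m+q+\varepsilon .
\]
The sign becomes $(-1)^{m-q}$ and the binomial coefficient becomes $\binom{2m+\varepsilon+1}{m-q}$. The $j$-th summand is therefore
\[
(-1)^{m-q}\binom{2m+\varepsilon+1}{m-q}\frac{1}{d!}\prod_{t=0}^{d-1}\bigl(2\lambda_q k+m+q+\varepsilon-t\bigr).
\]

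\emph{Step 2: recentering.} Put $x_q=\lambda_q(2k+1)=2\lambda_qk+\lambda_q$. Then each factor equals $x_q+s_t$ with
\[
s_t=m+\varepsilon-\alpha-t,\qquad 0\le t\le d-1 .
\]
The shifts $s_t$ run over consecutive values from $m+\varepsilon-\alpha$ down to $-m+1-\alpha$.
\begin{itemize}
\item If $\varepsilon=0$, then $\alpha=\tfrac12$ and the shifts are $\pm(a+\tfrac12)=\pm\lambda_a$ for $0\le a\le m-1$.
\item If $\varepsilon=1$, then $\alpha=1$ and the shifts are $0$ together with $\pm(a+1)=\pm\lambda_a$ for $0\le a\le m-1$.
\end{itemize}
In both cases
\[
\prod_{t=0}^{d-1}(x_q+s_t)=x_q^{\varepsilon}\prod_{a=0}^{m-1}\bigl(x_q^2-\lambda_a^2\bigr).
\]

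\emph{Step 3: substitution.} We have $x_q^\varepsilon=\lambda_q^\varepsilon(2k+1)^\varepsilon$ and
\[
x_q^2=\lambda_q^2(4u+1),\qquad u=k(k+1).
\]
Hence $x_q^2-\lambda_a^2=4\lambda_q^2u+\lambda_q^2-\lambda_a^2$. Summing over $q$ and pulling out the common factor $(2k+1)^\varepsilon$ gives $L_{Q_d}(k)=(2k+1)^\varepsilon C_{m,\varepsilon}(k(k+1))$, with $C_{m,\varepsilon}$ given exactly by \eqref{eq:C-product}.

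\emph{Step 4: degree.} By \eqref{eq:C-product}, $C_{m,\varepsilon}$ has degree at most $m$. Checking directly that its leading coefficient $\sum_q(-1)^{m-q}\binom{d+1}{m-q}\lambda_q^{2m+\varepsilon}4^m/d!$ is nonzero would be the only delicate point. I would instead avoid this computation. Since $Q_d$ is a full-dimensional $d$-dimensional lattice polytope, $L_{Q_d}$ has degree exactly $d$. As $k(k+1)$ has degree $2$ in $k$, we get
\[
d=\varepsilon+2\deg C_{m,\varepsilon},
\]
which forces $\deg C_{m,\varepsilon}=m$.

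The main (mild) obstacle is the bookkeeping in Step 2: one must verify that the shift set is exactly symmetric in both parities of $d$.
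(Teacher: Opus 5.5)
Your proposal is correct and is essentially the paper's proof: reindex by $j=m-q$, recenter each binomial at $X=\lambda_q(2k+1)$ so that the linear factors pair up as $X^2-\lambda_a^2$ (with an extra factor $X$ when $\varepsilon=1$), and substitute $X^2=\lambda_q^2(4u+1)$. Your Step~4, which deduces $\deg C_{m,\varepsilon}=m$ from $\deg L_{Q_d}=d$ (because $Q_d$ is full-dimensional), is a valid addition that settles the exact degree, a point the paper's proof does not address.
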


\begin{proof}
In \eqref{eq:Q-ehrhart}, set $j=m-q$.  Since
\[
2m+\varepsilon+1-2j=2\lambda_q,
\qquad
2m+\varepsilon-j=m+q+\varepsilon,
\]
we get
\[
L_{Q_{2m+\varepsilon}}(k)
=
\sum_{q=0}^{m}
(-1)^{m-q}
\binom{2m+\varepsilon+1}{m-q}
\binom{2\lambda_q k+m+q+\varepsilon}{2m+\varepsilon}.
\]
Hence it is enough to show that 
\begin{equation}\label{eq:ETS}
\binom{2\lambda_q k+m+q+\varepsilon}{2m+\varepsilon}
=
\frac{\lambda_q^\varepsilon(2k+1)^\varepsilon}
{(2m+\varepsilon)!}
\prod_{a=0}^{m-1}
\left(
4\lambda_q^2u+\lambda_q^2-\lambda_a^2
\right),
\end{equation}
where $u=k(k+1)$.
Let $X=\lambda_q(2k+1)$.
Then 
\[
2\lambda_q k+m+q+\varepsilon
=
X+m+\frac{\varepsilon-1}{2}.
\]
If $\varepsilon=0$, then the left-hand side of \eqref{eq:ETS} can be written as
\[
\binom{X +m-\frac1 2}{2m}
=
\frac{(X +m-\frac1 2) \cdots (X -m+\frac1 2) }{(2m)!}
=
\frac{\prod_{a=0}^{m-1} (X + \lambda_a)(X - \lambda_a) }{(2m)!}.
\]
If $\varepsilon=1$,  then the left-hand side of \eqref{eq:ETS} can be written as
\[
\binom{X +m}{2m+1}
=
\frac{(X +m) \cdots X \cdots (X -m) }{(2m+1)!}
=
\frac{X \prod_{a=0}^{m-1} (X + \lambda_a)(X - \lambda_a) }{(2m+1)!}.
\]
Since
\[
X^2-\lambda_a^2
=
\lambda_q^2 (4k^2+4k +1)-\lambda_a^2
=
4\lambda_q^2u+\lambda_q^2-\lambda_a^2,
\]
substitution gives \eqref{eq:ETS} in both
cases.
\end{proof}

\section{Divided differences}\label{sec:dd}

We collect the standard facts about divided differences that are used later.
We refer to \cite{deBoorDividedDifferences,DubeauHermite} for general
background.  In particular, the mean-value theorem, the node-removal
identity, the divided differences of monomials, and the reciprocal formula
used below appear as Theorems~6, 7, 12, and~13 of \cite{DubeauHermite}.

Since we also use $[x^n]F(x)$ for coefficient extraction, we denote divided
differences by the symbol $\DD$.  The points at which a divided difference is
taken are called its \emph{nodes}.  For a function $f$ defined at pairwise
distinct nodes $y_0,\ldots,y_r$, set
\begin{equation} \label{didi_def}
    \DD[y_0,\ldots,y_r]f
:=
\sum_{a=0}^r
\frac{f(y_a)}{\prod_{b\ne a}(y_a-y_b)}.
\end{equation}
This formula shows that divided differences are symmetric in their nodes and
linear in the function $f$.  They also satisfy the recursive formula
\begin{equation} \label{didi_rec}
\DD[y_0,\ldots,y_r]f
=
\frac{
\DD[y_1,\ldots,y_r]f-
\DD[y_0,\ldots,y_{r-1}]f
}{y_r-y_0}.
\end{equation}
This is also a consequence of \cite[Theorem~7]{DubeauHermite}.

By linearity and \cite[Theorem~12(i)]{DubeauHermite}, if $P(t)$ is a
polynomial of degree at most $r-1$, then
\begin{equation}\label{didi_deg}
\DD[y_0,\ldots,y_r]P=0.
\end{equation}

\begin{Lemma}\label{lem:power-dd-positive}
Let $r\ge1$, and let $y_0,\ldots,y_r$ be pairwise distinct positive real
numbers.  Then
\[
\DD[y_0,\ldots,y_r]t^{r-1/2}>0.
\]
\end{Lemma}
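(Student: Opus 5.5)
We will use the mean-value theorem for divided differences (Theorem~6 of \cite{DubeauHermite}). Let $f(t)=t^{r-1/2}$. It is $C^\infty$ on $(0,\infty)$, and the nodes are pairwise distinct and positive. The theorem then gives a point $\xi$ with $\min_a y_a<\xi<\max_a y_a$, hence $\xi>0$, such that
\[
\DD[y_0,\ldots,y_r]f=\frac{f^{(r)}(\xi)}{r!}.
\]
So the claim reduces to showing that the $r$-th derivative of $t^{r-1/2}$ is positive on $(0,\infty)$.

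We compute this derivative directly:
\[
f^{(r)}(t)=\Bigl(r-\tfrac12\Bigr)\Bigl(r-\tfrac32\Bigr)\cdots\Bigl(\tfrac12\Bigr)\,t^{-1/2}
=\prod_{j=1}^{r}\Bigl(j-\tfrac12\Bigr)\,t^{-1/2}.
\]
Each factor $j-\tfrac12$ with $1\le j\le r$ is positive, and $t^{-1/2}>0$ for $t>0$. Hence $f^{(r)}(\xi)>0$, and therefore $\DD[y_0,\ldots,y_r]t^{r-1/2}>0$.

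The only point needing care is that the mean-value theorem applies: $f$ must be $r$ times continuously differentiable on the closed interval spanned by the nodes. This holds because that interval lies in $(0,\infty)$, away from the singularity of $t^{r-1/2}$ at $0$.

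If one prefers to avoid the mean-value theorem, the Hermite--Genocchi integral representation gives the same result. It writes $\DD[y_0,\ldots,y_r]f$ as the integral of $f^{(r)}$ over the standard $r$-simplex, with $f^{(r)}$ evaluated at convex combinations of the nodes and weighted by a nonnegative density. Since $f^{(r)}>0$ throughout the convex hull of the nodes, that integral is strictly positive as well.
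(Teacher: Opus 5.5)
Your proof is correct and matches the paper's argument: the mean-value theorem for divided differences reduces the claim to the positivity of $\frac{d^r}{dt^r}t^{r-1/2}=\bigl(r-\tfrac12\bigr)\cdots\tfrac12\,t^{-1/2}$ at a point of the positive interval spanned by the nodes. The Hermite--Genocchi alternative is also valid, but the argument does not need it.
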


\begin{proof}
By the mean-value theorem for divided differences
\cite[Theorem~6]{DubeauHermite}, there exists a point $\xi$ in the positive
interval spanned by the nodes such that
\[
\DD[y_0,\ldots,y_r]t^{r-1/2}
=
\frac1{r!}
\left.\frac{d^r}{dt^r}t^{r-1/2}\right|_{t=\xi}.
\]
For $t>0$,
\[
\frac{d^r}{dt^r}t^{r-1/2}
=
\left(r-\frac12\right)
\left(r-\frac32\right)\cdots\frac12\,t^{-1/2}>0.
\]
Thus we have $\DD[y_0,\ldots,y_r]t^{r-1/2}>0$.
\end{proof}

The following node-removal identity is obtained by repeated application of
\cite[Theorem~7]{DubeauHermite}.

\begin{Lemma}\label{lem:node-removal}
Let $y_0,y_1,\ldots,y_s$ be pairwise distinct, let
$L\subseteq\{1,\ldots,s\}$, and write
\[
\{1,\ldots,s\}\setminus L=\{\ell_1<\cdots<\ell_q\}.
\]
For every function $h$ defined at $y_0,y_1,\ldots,y_s$,
\[
\DD[y_0,y_1,\ldots,y_s]
\left(
\prod_{b\in L}(t-y_b)h(t)
\right)
=
\DD[y_0,y_{\ell_1},\ldots,y_{\ell_q}]h.
\]
\end{Lemma}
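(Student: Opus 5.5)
The plan is to remove the nodes $y_b$ with $b\in L$ one at a time, using the recursive formula \eqref{didi_rec} (equivalently \cite[Theorem~7]{DubeauHermite}), and to argue by induction on $|L|$. For $L=\emptyset$ there is nothing to prove. Since divided differences are symmetric in their nodes, the position of the nodes in the list does not matter. It therefore suffices to prove the single-node case: for pairwise distinct nodes $z_0,\ldots,z_r$ and any function $g$,
\[
\DD[z_0,\ldots,z_r]\bigl((t-z_r)g(t)\bigr)=\DD[z_0,\ldots,z_{r-1}]g .
\]

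To prove the single-node case I would work directly from the definition \eqref{didi_def}. The term with $a=r$ vanishes, because its numerator is $(z_r-z_r)g(z_r)=0$. For each $a<r$, the factor $(z_a-z_r)$ in the numerator cancels the matching factor in the denominator $\prod_{b\ne a}(z_a-z_b)$. What remains is
\[
\sum_{a=0}^{r-1}\frac{g(z_a)}{\prod_{b\ne a,\,b\le r-1}(z_a-z_b)},
\]
which is exactly $\DD[z_0,\ldots,z_{r-1}]g$. Note that $g$ only needs to be defined at the nodes, which matches the hypothesis that $h$ is defined at $y_0,\ldots,y_s$.

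For the induction step, pick $b_0\in L$ and write
\[
\prod_{b\in L}(t-y_b)\,h(t)=(t-y_{b_0})\,g(t),\qquad g(t)=\prod_{b\in L\setminus\{b_0\}}(t-y_b)\,h(t).
\]
Apply the single-node case with the node $y_{b_0}$ moved to the last position; symmetry allows this. This removes $y_{b_0}$ from the node set and replaces the function by $g$. The remaining node set still contains $y_0$ and all $y_\ell$ with $\ell\notin L$, and $L\setminus\{b_0\}$ is a subset of its non-$y_0$ indices. The induction hypothesis then gives $\DD[y_0,y_{\ell_1},\ldots,y_{\ell_q}]h$.

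Two points need care in the write-up. First, when $L=\{1,\ldots,s\}$ the right-hand side is $\DD[y_0]h=h(y_0)$; this case is consistent with the argument, since the single-node case with $r=1$ is valid and the empty product in the denominator equals $1$. Second, one should check that the node $y_0$, which is never removed, guarantees that at least one node always remains.
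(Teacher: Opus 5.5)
Your proof is correct and matches the paper's: the paper also works directly from \eqref{didi_def}, noting that the summands at the nodes $y_b$ with $b\in L$ vanish and that the factors $(y_a-y_b)$ cancel in the rest, but it removes all of $L$ at once rather than one node at a time by induction. Your opening appeal to the recursion \eqref{didi_rec} is never actually used, since the cancellation from the definition is all the argument needs.
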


\begin{proof}
Using \eqref{didi_def}, the summands corresponding to nodes $y_b$
with $b\in L$ vanish.
In each remaining summand, the factors corresponding to the removed
nodes cancel between the numerator and denominator.
The resulting sum is precisely the divided difference on the
right-hand side.
\end{proof}

For example,
\[
\DD[y_0,y_1,y_2]\bigl((t-y_1)h(t)\bigr)
=
\DD[y_0,y_2]h.
\]
We also record the divided difference of a reciprocal linear factor.

\begin{Lemma}\label{lem:reciprocal-dd}
Let $y_0,\ldots,y_r$ be pairwise distinct and suppose that
$y_a+u\ne0$ for $0\le a\le r$.  Then
\begin{equation}\label{t+u}
\DD[y_0,\ldots,y_r]\frac1{t+u}
=
\frac{(-1)^r}{\prod_{a=0}^r(y_a+u)}.
\end{equation}
\end{Lemma}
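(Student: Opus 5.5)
The approach is an induction on $r$, using the recursive formula \eqref{didi_rec}; a direct computation from the defining formula \eqref{didi_def} would serve as a cross-check. Put $f(t)=1/(t+u)$.

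For the base case $r=0$, the definition \eqref{didi_def} gives
\[
\DD[y_0]f=\frac{1}{y_0+u},
\]
which agrees with the right-hand side of \eqref{t+u}.

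For the inductive step, assume the formula holds for every set of $r$ pairwise distinct nodes avoiding $-u$. Apply \eqref{didi_rec}:
\[
\DD[y_0,\ldots,y_r]f=\frac{1}{y_r-y_0}\left(\frac{(-1)^{r-1}}{\prod_{a=1}^{r}(y_a+u)}-\frac{(-1)^{r-1}}{\prod_{a=0}^{r-1}(y_a+u)}\right).
\]
Place the bracket over the common denominator $\prod_{a=0}^r(y_a+u)$. Its numerator becomes
\[
(-1)^{r-1}\bigl((y_0+u)-(y_r+u)\bigr)=(-1)^{r-1}(y_0-y_r).
\]
Dividing by $y_r-y_0$ contributes one more sign, which yields $(-1)^r/\prod_{a=0}^r(y_a+u)$, as claimed.

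There is no real obstacle here; the only care needed is the sign bookkeeping in the inductive step. As an independent check, one can also use the partial fraction identity. Viewing $1/\prod_a(t-y_a)$ in the variable $t$, evaluating its partial fraction expansion at $t=-u$ gives
\[
\sum_a \frac{1}{(y_a+u)\prod_{b\ne a}(y_a-y_b)}=\frac{(-1)^{r+1}}{\prod_a(-u-y_a)}\cdot(-1)=\frac{(-1)^r}{\prod_a(y_a+u)}.
\]
The left-hand side is exactly the sum in \eqref{didi_def}.
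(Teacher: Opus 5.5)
Your proof is correct, but it takes a different route from the paper. In your induction, the two inductive values over the common denominator $\prod_{a=0}^r(y_a+u)$ give numerator $(-1)^{r-1}(y_0-y_r)$, and dividing by $y_r-y_0$ supplies the final sign.

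The paper first uses the translation identity $\DD[y_0,\ldots,y_r]f(t+u)=\DD[x_0,\ldots,x_r]f(t)$ with $x_a=y_a+u$ to reduce to the case $u=0$. It then simply cites the known formula $\DD[x_0,\ldots,x_r]\,t^{-1}=(-1)^r/(x_0\cdots x_r)$, which is Theorem~13 of \cite{DubeauHermite}.

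Your argument handles the shift $u$ directly and in effect reproves that cited theorem. It is therefore self-contained, relying only on the recursion \eqref{didi_rec} already recorded in the paper. Your partial-fraction check is even more economical: it works straight from the definition \eqref{didi_def} and needs no induction at all. The paper's version is shorter on the page but outsources the actual computation.

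There is one small slip in your partial-fraction display. The middle expression should read $-1/\prod_a(-u-y_a)$, or equivalently $(-1)^{r+1}\cdot(-1)/\prod_a(y_a+u)$. As written, $\frac{(-1)^{r+1}}{\prod_a(-u-y_a)}\cdot(-1)$ equals $-1/\prod_a(y_a+u)$. Your final value $(-1)^r/\prod_a(y_a+u)$ is nevertheless right, and the induction does not depend on this check.
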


\begin{proof}
Set $x_a=y_a+u$.  Formula \eqref{didi_def} immediately gives translation invariance in
the form
\[
\DD[y_0,\ldots,y_r]f(t+u)
=
\DD[x_0,\ldots,x_r]f(t).
\]
Applying \cite[Theorem~13]{DubeauHermite} to $f(t)=1/t$, we obtain
\[
\DD[y_0,\ldots,y_r]\frac1{t+u}
=
\DD[x_0,\ldots,x_r]\frac1t
=
\frac{(-1)^r}{x_0x_1\cdots x_r}
=
\frac{(-1)^r}{\prod_{a=0}^r(y_a+u)}.
\]
\end{proof}

The following form will be used in the proof of Lemma~\ref{lem:dd-positive}.

\begin{Corollary}\label{cor:rational-dd}
Let $y_0,\ldots,y_r$ be pairwise distinct and suppose that $y_a+u\ne0$
for all $a$.  Then
\[
\DD[y_0,\ldots,y_r]
\frac{\prod_{b=1}^r(t+y_b)}{t+u}
=
\frac{\prod_{b=1}^r(u-y_b)}{\prod_{a=0}^r(u+y_a)}.
\]
\end{Corollary}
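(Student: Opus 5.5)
We want $\DD[y_0,\ldots,y_r]\, f$ with $f(t)=\prod_{b=1}^r(t+y_b)/(t+u)$. The plan is to split $f$ into a polynomial part, which divided differences of this order kill, and a single reciprocal term, which Lemma~\ref{lem:reciprocal-dd} evaluates.

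First, perform polynomial division of the numerator by $t+u$:
\[
\prod_{b=1}^r(t+y_b)=(t+u)P(t)+c,
\qquad
c=\prod_{b=1}^r(-u+y_b)=(-1)^r\prod_{b=1}^r(u-y_b),
\]
where $P$ has degree $r-1$ and the constant $c$ comes from evaluating the numerator at $t=-u$. Then
\[
f(t)=P(t)+\frac{c}{t+u}
\]
at every node, since $y_a+u\ne0$.

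Next, apply linearity of divided differences. Because $\deg P=r-1$, equation \eqref{didi_deg} gives $\DD[y_0,\ldots,y_r]P=0$. By Lemma~\ref{lem:reciprocal-dd},
\[
\DD[y_0,\ldots,y_r]\frac{c}{t+u}=\frac{c(-1)^r}{\prod_{a=0}^r(y_a+u)}.
\]
Substituting the value of $c$, the two signs $(-1)^r$ cancel, and we obtain
\[
\DD[y_0,\ldots,y_r]f=\frac{\prod_{b=1}^r(u-y_b)}{\prod_{a=0}^r(u+y_a)},
\]
which is the claimed formula.

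There is no real obstacle here. The only point needing care is the sign bookkeeping in the remainder $c$. When $r=0$ the product is empty and the identity reduces to $\DD[y_0]\frac{1}{t+u}=\frac{1}{y_0+u}$, consistent with the formula.
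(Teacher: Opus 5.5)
Your proof is correct and is essentially identical to the paper's. Both split $\prod_{b=1}^r(t+y_b)/(t+u)$ into a polynomial of degree at most $r-1$, which is killed by \eqref{didi_deg}, plus the remainder term $c/(t+u)$, which is evaluated by Lemma~\ref{lem:reciprocal-dd}; the two factors $(-1)^r$ then cancel, and the paper treats $r=0$ separately just as you do.
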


\begin{proof}
For $r=0$, the identity is simply evaluation at $y_0$.  Assume $r\ge1$ and set
\[
q(t):=\prod_{b=1}^r(t+y_b).
\]
Since $q(t)-q(-u)$ is divisible by $t+u$, there is a polynomial $Q_u(t)$
of degree at most $r-1$ such that
\[
q(t)=(t+u)Q_u(t)+q(-u).
\]
Hence
\[
\frac{q(t)}{t+u}
=
Q_u(t)+\frac{q(-u)}{t+u}.
\]
By \eqref{didi_deg}, the $r$-th divided difference of $Q_u$ vanishes.  Therefore
Lemma~\ref{lem:reciprocal-dd} gives
\[
\begin{aligned}
\DD[y_0,\ldots,y_r]\frac{q(t)}{t+u}
&=q(-u)\DD[y_0,\ldots,y_r]\frac1{t+u}\\
&=\left(\prod_{b=1}^r(y_b-u)\right)
\frac{(-1)^r}{\prod_{a=0}^r(y_a+u)}\\
&=\frac{\prod_{b=1}^r(u-y_b)}{\prod_{a=0}^r(u+y_a)}.
\end{aligned}
\]
\end{proof}

\section{Local decompositions}\label{sec:local-expansion}

We use the rising factorial
\[
(a)_j=a(a+1)\cdots(a+j-1),
\qquad
(a)_0=1.
\]

\subsection{A local expansion}

Let $S_m:=\{0,1,\ldots,m-1\}$.
Continue to use the notation
\[
\alpha=\frac{1+\varepsilon}{2},
\qquad
\lambda_q=q+\alpha.
\]
For $J\subseteq S_m$, write $r=|J|$ and define
\begin{equation}\label{eq:parity-weight}
\omega_{m,\varepsilon}(J)
:=
4^r(m+\alpha)
\DD[\lambda_m^2,(\lambda_j^2)_{j\in J}]
t^{r-1/2}.
\end{equation}

\begin{Proposition}\label{prop:unified-local-expansion}
For $\varepsilon\in\{0,1\}$,
\[
C_{m,\varepsilon}(u)
=
\sum_{J\subseteq S_m}
\omega_{m,\varepsilon}(J)u^{|J|}.
\]
Moreover, $\omega_{m,\varepsilon}(\emptyset)=1$
and
\[
\omega_{m,\varepsilon}(J)>0
\qquad
(J\subseteq S_m).
\]
\end{Proposition}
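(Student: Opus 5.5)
The plan is to expand the product in \eqref{eq:C-product} by subsets $J\subseteq S_m$, recognize the resulting sum over $q$ as a single divided difference on the full node set $\lambda_0^2,\ldots,\lambda_m^2$, and then remove nodes using Lemma~\ref{lem:node-removal}. Positivity will then follow from Lemma~\ref{lem:power-dd-positive}.

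First, for each $q$ expand
\[
\prod_{a=0}^{m-1}\bigl(4\lambda_q^2u+\lambda_q^2-\lambda_a^2\bigr)
=\sum_{J\subseteq S_m}4^{|J|}\lambda_q^{2|J|}u^{|J|}\prod_{a\in S_m\setminus J}(\lambda_q^2-\lambda_a^2).
\]
Substituting into \eqref{eq:C-product}, the coefficient attached to $J$ with $r=|J|$ becomes
\[
4^r\sum_{q=0}^m w_q\, f_J(\lambda_q^2),
\qquad
f_J(t)=t^{r}\prod_{a\in S_m\setminus J}(t-\lambda_a^2),
\]
where
\[
w_q=\frac{(-1)^{m-q}\binom{2m+\varepsilon+1}{m-q}\lambda_q^\varepsilon}{(2m+\varepsilon)!}.
\]

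The key step is the weight identity
\[
w_q=\frac{m+\alpha}{\lambda_q\prod_{b\ne q,\,0\le b\le m}(\lambda_q^2-\lambda_b^2)}.
\]
To check it, split $\lambda_q^2-\lambda_b^2=(q-b)(q+b+2\alpha)$. This gives $\prod_{b\ne q}(q-b)=(-1)^{m-q}q!(m-q)!$ and $\prod_{b\ne q}(q+b+2\alpha)=(q+2\alpha)_{m+1}/(2\lambda_q)$. Hence the right-hand side equals
\[
\frac{2(m+\alpha)(-1)^{m-q}}{q!\,(m-q)!\,(q+2\alpha)_{m+1}}.
\]
For $\varepsilon=0$ we have $(q+1)_{m+1}=(m+q+1)!/q!$, so the expression becomes $(-1)^{m-q}(2m+1)/((m-q)!(m+q+1)!)$, which is exactly $w_q$. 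The case $\varepsilon=1$, where $\alpha=1$ and $(q+2)_{m+1}=(m+q+2)!/(q+1)!$, is the analogous factorial check.

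Given this identity, $\sum_q w_q f_J(\lambda_q^2)$ is, by \eqref{didi_def}, equal to
\[
(m+\alpha)\,\DD[\lambda_m^2,\lambda_0^2,\ldots,\lambda_{m-1}^2]\Bigl(t^{r-1/2}\prod_{a\in S_m\setminus J}(t-\lambda_a^2)\Bigr),
\]
where the factor $t^{-1/2}$ absorbs the $1/\lambda_q$. Lemma~\ref{lem:node-removal} with $y_0=\lambda_m^2$ and $L=S_m\setminus J$ then deletes the nodes outside $J$. This yields exactly $\omega_{m,\varepsilon}(J)$ as defined in \eqref{eq:parity-weight}, proving the expansion.

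For $J=\emptyset$ the divided difference is evaluation at $\lambda_m^2$, giving $\omega_{m,\varepsilon}(\emptyset)=(m+\alpha)\lambda_m^{-1}=1$. For $r\ge1$ the nodes $\lambda_m^2$ and $\lambda_j^2$ ($j\in J$) are pairwise distinct positive reals, so Lemma~\ref{lem:power-dd-positive} gives $\omega_{m,\varepsilon}(J)>0$. The main obstacle is the factorial bookkeeping in the weight identity, especially the $\varepsilon=1$ case; everything else is formal.
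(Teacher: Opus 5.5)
Your proposal is correct and follows essentially the same route as the paper: you use the same subset expansion of the product, the same factorization $\lambda_q^2-\lambda_b^2=(q-b)(q+b+2\alpha)$ to match the coefficients with divided-difference denominators, and Lemma~\ref{lem:power-dd-positive} for positivity. The only cosmetic difference is in how the nodes outside $J\cup\{m\}$ are removed: the paper drops the vanishing terms $q\in S_m\setminus J$ and cancels the corresponding factors by hand, whereas you first form the divided difference on all $m+1$ nodes and then invoke Lemma~\ref{lem:node-removal}, which performs exactly that cancellation.
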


\begin{proof}
Since 
\[
\begin{aligned}
\prod_{a=0}^{m-1}
\left(
4\lambda_q^2u+\lambda_q^2-\lambda_a^2
\right)
=
\sum_{J \subseteq S_m} 
\left(
4^{|J|} \lambda_q^{2|J|} u^{|J|}
\prod_{a\in S_m\setminus J}
(\lambda_q^2-\lambda_a^2)
\right),
\end{aligned}
\]
we have
\[
C_{m,\varepsilon}(u)
=
\sum_{J \subseteq S_m} \nu_{m,\varepsilon}(J) 
u^{|J|},
\]
where
\[
\nu_{m,\varepsilon}(J)
=
\frac{4^{|J|}}{(2m+\varepsilon)!}
\sum_{q=0}^{m}
(-1)^{m-q}
\binom{2m+\varepsilon+1}{m-q}
\lambda_q^{2|J|+\varepsilon}
\prod_{a\in S_m\setminus J}
(\lambda_q^2-\lambda_a^2).
\]
If $q\in S_m\setminus J$, the product contains
$\lambda_q^2-\lambda_q^2=0$.  
Thus we have
\[
\nu_{m,\varepsilon}(J)
=
\frac{4^{|J|}}{(2m+\varepsilon)!}
\sum_{q \in J\cup\{m\}}
(-1)^{m-q}
\binom{2m+\varepsilon+1}{m-q}
\lambda_q^{2|J|+\varepsilon}
\prod_{a\in S_m\setminus J}
(\lambda_q^2-\lambda_a^2).
\]
For $0\le q\le m$,
\begin{equation}\label{eq:lambda-product}
\begin{aligned}   
\prod_{\substack{0\le a\le m\\a\ne q}}
(\lambda_q^2-\lambda_a^2)
&=
\prod_{\substack{0\le a\le m\\a\ne q}}
(q-a)(q+a+2\alpha)\\
&=
(-1)^{m-q}q!(m-q)!
\frac{(q+2\alpha)_{m+1}}{2\lambda_q}
\end{aligned}
\end{equation}
since
\[
\begin{aligned}
\prod_{\substack{0\le a\le m\\a\ne q}}(q-a)
&=
q (q-1) \cdots 1 \cdot (-1) \cdots (-m+q) 
=
(-1)^{m-q}q!(m-q)!,\\
\prod_{\substack{0\le a\le m\\a\ne q}}(q+a+2\alpha)
&=
\frac{(q+2\alpha)_{m+1}}{2\lambda_q}.
\end{aligned}
\]
For $q\in J \cup \{m\}$,
\[
\prod_{a\in S_m\setminus J}
(\lambda_q^2-\lambda_a^2)
=
\frac{
\displaystyle
\prod_{\substack{0\le a\le m\\a\ne q}}
(\lambda_q^2-\lambda_a^2)
}{
\displaystyle
\prod_{\substack{c\in  J \cup \{m\}\\c\ne q}}
(\lambda_q^2-\lambda_c^2)
}.
\]
Using \eqref{eq:lambda-product}, together with
\[
\begin{aligned}
(q+2\alpha)_{m+1}
&=
(q+\varepsilon +1)_{m+1}=
\frac{(m+q+\varepsilon+1)!}{(q+\varepsilon)!},\\
\frac{q!}{(q+\varepsilon)!}
&=
\left\{
\begin{array}{cccc}
1 &=&\lambda_q^{-\varepsilon} & \mbox{if } \varepsilon=0,\\
\\
\frac{1}{q+1}&=&\lambda_q^{-\varepsilon}& \mbox{if } \varepsilon=1,
\end{array}
\right.
\\
\frac{(2m+\varepsilon+1)!}{(2m+\varepsilon)!}
&=
2m+\varepsilon+1=
2(m+\alpha),
\end{aligned}
\]
we obtain
\[
\nu_{m,\varepsilon}(J)
=
4^{|J|}(m+\alpha)
\sum_{q\in  J \cup \{m\}}
\frac{\lambda_q^{2|J|-1}}
{\displaystyle
\prod_{\substack{c\in  J \cup \{m\}\\c\ne q}}
(\lambda_q^2-\lambda_c^2)}.
\]
The last sum is precisely the divided difference
\[
\DD[\lambda_m^2,(\lambda_j^2)_{j\in J}]
t^{|J|-1/2}.
\]
Thus
\[
\nu_{m,\varepsilon}(J)=\omega_{m,\varepsilon}(J),
\]
which proves the expansion.

For $J=\emptyset$,
\[
\omega_{m,\varepsilon}(\emptyset)
=
(m+\alpha)\DD[(m+\alpha)^2]t^{-1/2}=1.
\]
For $J\ne\emptyset$, positivity follows from
Lemma~\ref{lem:power-dd-positive}.  Hence every weight is positive.
\end{proof}

\subsection{Magic coefficients and their local decomposition}\label{sec:magic-local}

The positivity in Proposition~\ref{prop:unified-local-expansion}
does not by itself imply magic positivity, since the change to the
magic basis introduces alternating signs.
We therefore decompose the magic coefficients into local contributions,
working with arbitrary distinct positive nodes with one distinguished
maximum.

Let $m\ge1$, let $S_m=\{0,\ldots,m-1\}$, and let
\[
X,x_0,\ldots,x_{m-1}
\]
be pairwise distinct positive real numbers such that
\[
X>x_j
\qquad(j\in S_m).
\]
Write $\mathbf x=(X,x_0,\ldots,x_{m-1})$.
For $J\subseteq S_m$, define
\begin{equation}\label{eq:generic-weight}
\omega_{\mathbf x}(J)
:=
4^{|J|}X
\DD[X^2,(x_j^2)_{j\in J}]t^{|J|-1/2},
\qquad
\Omega_{\mathbf x}(u)
:=
\sum_{J\subseteq S_m}\omega_{\mathbf x}(J)u^{|J|}.
\end{equation}
The magic transform of the polynomial
$\Omega_{\mathbf x}(k(k+1))$ is
\[
\mathcal M_{\mathbf x}(y)
:=
(1-y)^{2m}
\Omega_{\mathbf x}\left(\frac{y}{(1-y)^2}\right).
\]
By \eqref{eq:generic-weight},
\begin{equation}\label{eq:generic-M}
\mathcal M_{\mathbf x}(y)
=
\sum_{J\subseteq S_m}
\omega_{\mathbf x}(J)y^{|J|}(1-y)^{2m-2|J|}.
\end{equation}
Write
\[
\mathcal M_{\mathbf x}(y)
=
\sum_{s=0}^{2m}d_s y^s.
\]
For $I\subseteq S_m$ with $|I|=s\le m$, define
\begin{equation}\label{eq:generic-tau}
\tau_{\mathbf x}(I)
:=
\sum_{J\subseteq I}
(-1)^{s-|J|}
\frac{\binom{2m-2|J|}{s-|J|}}
{\binom{m-|J|}{s-|J|}}
\omega_{\mathbf x}(J).
\end{equation}

\begin{Proposition}\label{prop:generic-localization}
For $0\le s\le m$,
\[
d_s
=
\sum_{\substack{I\subseteq S_m\\|I|=s}}
\tau_{\mathbf x}(I).
\]
Moreover, $d_s=d_{2m-s}$.
\end{Proposition}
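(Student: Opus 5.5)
Two things have to be shown: the formula for $d_s$ as a sum of local terms, and the palindromic symmetry.

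For the formula, we expand each term of \eqref{eq:generic-M} binomially. This gives
\[
d_s=\sum_{\substack{J\subseteq S_m\\|J|\le s}}(-1)^{s-|J|}\binom{2m-2|J|}{s-|J|}\omega_{\mathbf x}(J).
\]
For $s\le m$ we then redistribute each $J$ with $|J|=j\le s$ over the supersets $I\supseteq J$ with $I\subseteq S_m$ and $|I|=s$. There are exactly $\binom{m-j}{s-j}$ such $I$, so writing
\[
\binom{2m-2j}{s-j}=\sum_{\substack{I\supseteq J,\ |I|=s}}\frac{\binom{2m-2j}{s-j}}{\binom{m-j}{s-j}}
\]
and interchanging the order of summation over $J$ and $I$ yields
\[
d_s=\sum_{|I|=s}\sum_{J\subseteq I}(-1)^{s-|J|}\frac{\binom{2m-2|J|}{s-|J|}}{\binom{m-|J|}{s-|J|}}\omega_{\mathbf x}(J)=\sum_{|I|=s}\tau_{\mathbf x}(I).
\]
This is purely combinatorial double counting. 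The only thing to check is that $\binom{m-j}{s-j}\neq0$, which holds because $s\le m$.

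For the symmetry $d_s=d_{2m-s}$, we argue termwise. Each summand $y^j(1-y)^{2m-2j}$ in \eqref{eq:generic-M} is, up to the sign $(-1)^{2m-2j}=1$, invariant under $y\mapsto 1/y$ after multiplying by $y^{2m}$:
\[
y^{2m}\,(1/y)^j(1-1/y)^{2m-2j}=y^{2m-j}y^{-(2m-2j)}(y-1)^{2m-2j}=y^j(1-y)^{2m-2j}.
\]
Hence $y^{2m}\mathcal M_{\mathbf x}(1/y)=\mathcal M_{\mathbf x}(y)$, which is exactly the coefficient symmetry $d_s=d_{2m-s}$. Equivalently, one can see this from the invariance of $u=k(k+1)$ under $k\mapsto -1-k$ combined with the magic-basis reciprocity.

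Neither step is a serious obstacle. The only point requiring care is the bookkeeping in the interchange of summation, together with noting that the weights $\omega_{\mathbf x}(J)$ play no role in this part beyond being constants.
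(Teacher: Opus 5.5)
Your proposal is correct and matches the paper's proof: you redistribute each $J$ over its $\binom{m-|J|}{s-|J|}$ supersets $I\subseteq S_m$ of size $s$, and you check termwise that $y^{2m}\mathcal M_{\mathbf x}(1/y)=\mathcal M_{\mathbf x}(y)$ because $2m-2|J|$ is even. Your extra remark that $\binom{m-|J|}{s-|J|}\neq 0$ for $s\le m$ is a harmless detail the paper leaves implicit.
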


\begin{proof}
The coefficient of $y^s$ in the summand of \eqref{eq:generic-M} indexed by
$J$ is
\[
(-1)^{s-|J|}
\binom{2m-2|J|}{s-|J|}
\omega_{\mathbf x}(J).
\]
A fixed $J$ with $|J|\le s$ is contained in exactly
\[
\binom{m-|J|}{s-|J|}
\]
subsets $I\subseteq S_m$ of cardinality $s$.  Summing
\eqref{eq:generic-tau} over all such $I$ therefore gives $d_s$.

For the symmetry, each summand in \eqref{eq:generic-M} satisfies
\[
y^{2m}
\left(
(1/y)^{|J|}(1-1/y)^{2m-2|J|}
\right)
=
y^{|J|}(1-y)^{2m-2|J|},
\]
since $2m-2|J|$ is even.  
Hence $y^{2m}\mathcal M_{\mathbf x}(1/y)=\mathcal M_{\mathbf x}(y)$.
\end{proof}

\subsection{A divided-difference formula for the local coefficients}\label{sec:local-dd}

For $0\le q\le s$, let
\[
e_q(z_1,\ldots,z_s)
=
\sum_{\substack{A\subseteq\{1,\ldots,s\}\\|A|=q}}
\prod_{b\in A}z_b,
\qquad
e_0=1,
\]
be the elementary symmetric polynomial.  
For the terminating hypergeometric identity below, see
\cite[Chapter~2]{AndrewsAskeyRoy}.
For $0\le j\le s\le m$, define
\[
\beta_{m,s,j}
:=
\sum_{\ell=0}^j
(-1)^{j-\ell}\binom{j}{\ell}4^\ell
\frac{\binom{2m-2\ell}{s-\ell}}
{\binom{m-\ell}{s-\ell}}.
\]

\begin{Lemma}\label{lem:beta-identity}
Let $a=m-s+\frac12$.
Then
\[
\beta_{m,s,j}
=
\frac{\binom{2m}{s}}{\binom{m}{s}}
\frac{(a)_j(a)_{s-j}}{(a)_s}.
\]
In particular,
\[
\beta_{m,s,j}>0
\qquad\text{and}\qquad
\beta_{m,s,j}=\beta_{m,s,s-j}.
\]
\end{Lemma}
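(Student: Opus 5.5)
The plan is to recognize the alternating sum defining $\beta_{m,s,j}$ as a terminating Gauss hypergeometric series at argument $1$, evaluate it with the Chu--Vandermonde identity \cite[Chapter~2]{AndrewsAskeyRoy}, and then rewrite the resulting Pochhammer ratio in the stated form.

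\emph{Step 1: the summand as a hypergeometric term.} Put
\[
T(\ell)=\frac{\binom{2m-2\ell}{s-\ell}}{\binom{m-\ell}{s-\ell}}
=\frac{(2m-2\ell)!\,(m-s)!}{(2m-s-\ell)!\,(m-\ell)!},
\qquad 0\le\ell\le s .
\]
Its term ratio is
\[
\frac{T(\ell+1)}{T(\ell)}
=\frac{(2m-s-\ell)(m-\ell)}{(2m-2\ell)(2m-2\ell-1)}
=\frac{2m-s-\ell}{2(2m-2\ell-1)}
=\frac14\cdot\frac{\ell-(2m-s)}{\ell-(m-\frac12)}.
\]
Iterating from $T(0)$ gives
\[
T(\ell)=4^{-\ell}\,T(0)\,\frac{(s-2m)_\ell}{(\frac12-m)_\ell}.
\]
The denominator never vanishes, because $\frac12-m$ is not an integer.

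\emph{Step 2: apply Chu--Vandermonde.} Use $(-1)^{\ell}\binom{j}{\ell}=(-j)_\ell/\ell!$. The factor $4^\ell$ in $\beta_{m,s,j}$ cancels the $4^{-\ell}$ from Step 1, so
\[
\beta_{m,s,j}=(-1)^jT(0)\sum_{\ell=0}^{j}\frac{(-j)_\ell(s-2m)_\ell}{(\frac12-m)_\ell\,\ell!}
=(-1)^jT(0)\,{}_2F_1\!\left(-j,\,s-2m;\,\tfrac12-m;\,1\right).
\]
The Chu--Vandermonde identity ${}_2F_1(-j,b;c;1)=(c-b)_j/(c)_j$ applies with $c-b=\frac12-m-s+2m=m-s+\frac12=a$. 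This gives
\[
\beta_{m,s,j}=T(0)\,\frac{(-1)^j(a)_j}{(\frac12-m)_j},
\qquad T(0)=\frac{\binom{2m}{s}}{\binom{m}{s}}.
\]

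\emph{Step 3: match the stated form.} First,
\[
(-1)^j(\tfrac12-m)_j=(m-\tfrac12)(m-\tfrac32)\cdots(m-j+\tfrac12).
\]
Second, since $a+s-1=m-\frac12$,
\[
\frac{(a)_s}{(a)_{s-j}}=(a+s-j)\cdots(a+s-1)=(m-j+\tfrac12)\cdots(m-\tfrac12).
\]
These two products coincide, so $(-1)^j/(\frac12-m)_j=(a)_{s-j}/(a)_s$, which is exactly the claimed formula.

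\emph{Consequences.} Since $s\le m$, we have $a\ge\frac12>0$. Hence all rising factorials are positive and $\beta_{m,s,j}>0$. The expression is invariant under $j\mapsto s-j$, which gives the symmetry.

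The only delicate point is Step 1: the ratio must be simplified correctly, with the factor $m-\ell$ cancelling against $2m-2\ell$. Getting the lower parameter $\frac12-m$ right is what makes $c-b$ come out as $a$. The rest is routine bookkeeping.
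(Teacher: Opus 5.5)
Your proof is correct and is essentially the paper's argument. The paper computes the same term ratio, $4^\ell C_\ell/C_0=(1-2a-s)_\ell/(1-a-s)_\ell$, which is exactly your $(s-2m)_\ell/(\frac12-m)_\ell$; it then applies the terminating Chu--Vandermonde identity and rewrites $(1-a-s)_j=(-1)^j(a)_s/(a)_{s-j}$ as in your Step 3, and your version simply writes out the ratio computation that the paper leaves as a ``direct comparison of successive ratios.''
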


\begin{proof}
Set
\[
C_\ell
=
\frac{\binom{2m-2\ell}{s-\ell}}
{\binom{m-\ell}{s-\ell}},
\qquad
C_0=\frac{\binom{2m}{s}}{\binom{m}{s}}.
\]
A direct comparison of successive ratios gives
\[
\frac{4^\ell C_\ell}{C_0}
=
\frac{(1-2a-s)_\ell}{(1-a-s)_\ell}.
\]
Therefore
\[
\frac{\beta_{m,s,j}}{C_0}
=
(-1)^j
\sum_{\ell=0}^j
\frac{(-j)_\ell(1-2a-s)_\ell}
{(1-a-s)_\ell\,\ell!}
=
(-1)^j\frac{(a)_j}{(1-a-s)_j},
\]
where the second equality is the terminating Chu--Vandermonde identity.
Finally,
\[
(1-a-s)_j
=
(-1)^j(a+s-j)_j
=
(-1)^j\frac{(a)_s}{(a)_{s-j}},
\]
which proves the formula.
\end{proof}

\begin{Proposition}\label{prop:generic-local-dd}
Let $I=\{i_1<\cdots<i_s\}\subseteq S_m$,
and put
\[
z_0=X^2,
\qquad
z_b=x_{i_b}^2
\quad(1\le b\le s).
\]
Then
\begin{equation}\label{eq:generic-local-dd}
\tau_{\mathbf x}(I)
=
X C_{m,s}
\DD[z_0,\ldots,z_s]
\frac{
\displaystyle
\sum_{j=0}^s
(a)_j(a)_{s-j}
e_{s-j}(z_1,\ldots,z_s)t^j
}{\sqrt t},
\end{equation}
where
\[
a=m-s+\frac12,
\qquad
C_{m,s}
=
\frac{\binom{2m}{s}}
{\binom{m}{s}(a)_s}>0.
\]
\end{Proposition}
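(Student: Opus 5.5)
Here $\tau_{\mathbf x}(I)$ is a signed sum of the weights $\omega_{\mathbf x}(J)$, and each weight is a divided difference of a power of $t$. The plan is to turn every $\omega_{\mathbf x}(J)$ with $J\subseteq I$ into a divided difference on the full node set $z_0,\ldots,z_s$, using Lemma~\ref{lem:node-removal}. Then the sum in \eqref{eq:generic-tau} collapses into a single divided difference by linearity. After that, the coefficients are reorganized with Lemma~\ref{lem:beta-identity}.

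\emph{Step 1: lift each weight to the full node set.} For $J\subseteq I$ with $|J|=r$, let $L=\{b: i_b\notin J\}$, so $|L|=s-r$. Lemma~\ref{lem:node-removal} gives
\[
\DD[z_0,(z_b)_{i_b\in J}]\,t^{r-1/2}
=
\DD[z_0,\ldots,z_s]\Bigl(\prod_{b\in L}(t-z_b)\,t^{r-1/2}\Bigr).
\]
Therefore
\[
\tau_{\mathbf x}(I)=X\,\DD[z_0,\ldots,z_s]\frac{F(t)}{\sqrt t},
\qquad
F(t)=\sum_{J\subseteq I}(-1)^{s-|J|}C_{|J|}4^{|J|}\,t^{|J|}\prod_{b\in L}(t-z_b),
\]
where $C_\ell=\binom{2m-2\ell}{s-\ell}/\binom{m-\ell}{s-\ell}$ is as in the proof of Lemma~\ref{lem:beta-identity}.

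\emph{Step 2: expand $F$ in powers of $t$.} Write $(-1)^{s-r}\prod_{b\in L}(t-z_b)=\prod_{b\in L}(z_b-t)$ and expand this product. Collect the terms as follows: a term $t^j$ times a product of $z_b$ over a set $B$ with $|B|=s-j$ arises from those $J$ whose complement $L$ contains $B$. Equivalently, $J$ ranges over the subsets of the $j$-element set $I\setminus B$, with sign $(-1)^{j-|J|}$. The coefficient of $t^j\prod_{b\in B}z_b$ is therefore
\[
\sum_{\ell=0}^{j}(-1)^{j-\ell}\binom{j}{\ell}4^\ell C_\ell=\beta_{m,s,j}.
\]
Hence $F(t)=\sum_{j}\beta_{m,s,j}\,e_{s-j}(z_1,\ldots,z_s)\,t^j$.

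\emph{Step 3: conclude.} Substitute the closed form of $\beta_{m,s,j}$ from Lemma~\ref{lem:beta-identity}. The factor $\binom{2m}{s}/(\binom{m}{s}(a)_s)=C_{m,s}$ comes out, which gives \eqref{eq:generic-local-dd}. Positivity of $C_{m,s}$ is clear because $a>0$.

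The main obstacle is the bookkeeping in Step 2: one must check the signs and the sets being summed over carefully. This matters because $J$ and $L$ are complementary inside $I$, and each monomial is indexed by the set $B\subseteq L$. I would verify the result on $s=1$ as a sanity check.
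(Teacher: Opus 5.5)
Your proposal is correct and follows essentially the same route as the paper. Both proofs lift each $\omega_{\mathbf x}(J)$ to the full node set $z_0,\ldots,z_s$ via Lemma~\ref{lem:node-removal}, combine by linearity, identify the coefficient of $e_{s-j}(z_1,\ldots,z_s)t^j$ as $\beta_{m,s,j}$, and conclude with Lemma~\ref{lem:beta-identity}. The only cosmetic difference is that the paper first groups by $|J|=r$ into $e_{s-r}(t-z_1,\ldots,t-z_s)$ and expands that, whereas you collect each squarefree monomial directly; your sign and counting bookkeeping is right.
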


\begin{proof}
For $J\subseteq I$, let
\[
\sigma(J)
=
\{b\in\{1,\ldots,s\}:i_b\in J\}.
\]
If $|J|=r$, then by Lemma~\ref{lem:node-removal},
\[
\begin{aligned}
\omega_{\mathbf x}(J)
&=
4^rX
\DD[z_0,(z_b)_{b\in\sigma(J)}]t^{r-1/2}\\
&=
4^rX
\DD[z_0,z_1,\ldots,z_s]
\left(
t^{r-1/2}
\prod_{b\notin\sigma(J)}(t-z_b)
\right).
\end{aligned}
\]
Substituting this into \eqref{eq:generic-tau}, grouping by $r=|J|$, and
using
\[
\sum_{\substack{J\subseteq I\\|J|=r}}
\prod_{b\notin\sigma(J)}(t-z_b)
=
e_{s-r}(t-z_1,\ldots,t-z_s),
\]
we obtain
\[
\tau_{\mathbf x}(I)
=
X\DD[z_0,\ldots,z_s]
\frac{\widetilde P(t)}{\sqrt t},
\]
where
\[
\widetilde P(t)
=
\sum_{r=0}^s
(-1)^{s-r}
\frac{\binom{2m-2r}{s-r}}
{\binom{m-r}{s-r}}
(4t)^r
e_{s-r}(t-z_1,\ldots,t-z_s).
\]
The elementary identity
\[
e_{s-r}(t-z_1,\ldots,t-z_s)
=
\sum_{j=r}^s
(-1)^{s-j}\binom{j}{r}
e_{s-j}(z_1,\ldots,z_s)t^{j-r}
\]
shows that the coefficient of
\[
e_{s-j}(z_1,\ldots,z_s)t^j
\]
in $\widetilde P(t)$ is $\beta_{m,s,j}$.  The result now follows from
Lemma~\ref{lem:beta-identity}.
\end{proof}

\section{Positivity of the local coefficients}\label{sec:plus}
\subsection{A positivity lemma for divided differences}

The following inequality is the key positivity statement used below.

\begin{Lemma}\label{lem:dd-positive}
Let $r\ge0$ and $0<y_r<\cdots<y_1<y_0$.
Then
\[
\DD[y_0,\ldots,y_r]
\frac{\prod_{b=1}^r(t+y_b)}{\sqrt t}
>0.
\]
\end{Lemma}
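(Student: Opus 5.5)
The plan is to turn the divided difference into an integral using Corollary~\ref{cor:rational-dd}, and then prove that the integral is positive. The case $r=0$ is just evaluation at $y_0$: it gives $y_0^{-1/2}>0$. For $r\ge1$ I start from the Stieltjes representation
\[
\frac1{\sqrt t}=\frac1\pi\int_0^\infty \frac{s^{-1/2}}{t+s}\,ds\qquad(t>0).
\]
Divided differences are finite linear combinations of point evaluations, so they commute with the integral. Applying Corollary~\ref{cor:rational-dd} with $u=s>0$ (the hypothesis $y_a+s\neq0$ holds because all nodes are positive) gives
\[
\DD[y_0,\ldots,y_r]\frac{\prod_{b=1}^r(t+y_b)}{\sqrt t}
=\frac1\pi\int_0^\infty
\frac{s^{-1/2}}{s+y_0}\prod_{b=1}^r\frac{s-y_b}{s+y_b}\,ds .
\]
So everything reduces to showing that this integral is positive.

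The integrand is positive for $s>y_1$. Its sign alternates on the intervals cut out by $y_r<\cdots<y_1$, and it has sign $(-1)^r$ on $(0,y_r)$. The natural move is the substitution $s=w^2$, $y_b=c_b^2$, which turns the integral into
\[
\frac{2}{\pi}\int_0^\infty \frac{1}{w^2+c_0^2}\prod_{b=1}^r\frac{w^2-c_b^2}{w^2+c_b^2}\,dw .
\]
This is an integral of an even rational function, so it can be evaluated as half of an integral over the real line by residues in the upper half-plane. The poles are simple and sit at $ic_0,ic_1,\ldots,ic_r$. This gives the closed form
\[
\sum_{a=0}^r \frac{1}{c_a}\prod_{b\ne a}\frac{1}{c_a+c_b}\cdot\prod_{b\ge1,\,b\ne a}\frac{c_a+c_b}{c_a-c_b}\cdot(\text{explicit sign factors}).
\]
The main obstacle is to reorganise this sum into something manifestly positive.

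I would first try an induction on $r$. The identity
\[
\frac{w^2-c_r^2}{w^2+c_r^2}=1-\frac{2c_r^2}{w^2+c_r^2}
\]
removes the smallest node. It expresses the integral for $(c_0,\ldots,c_r)$ as the integral for $(c_0,\ldots,c_{r-1})$ minus $2c_r^2$ times an integral with an extra Cauchy factor. The aim is then to show that the subtracted term is strictly smaller, using the ordering $c_r<c_{r-1}<\cdots<c_0$. The largest node $c_0$ appears only in the positive weight $1/(w^2+c_0^2)$, and this is exactly where the hypothesis $y_0>y_1$ should enter.

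As a fallback I would use a pairing argument on the $w$-line. The factor $\prod_b (w-c_b)/(w+c_b)$ is a Blaschke-type product. The contributions from consecutive intervals $(c_{b+1},c_b)$ come with opposite signs, and each negative piece should be dominated by the adjacent positive piece, because the weight $1/(w^2+c_0^2)$ is decreasing. The tail $(c_1,\infty)$, where the integrand is positive, should absorb whatever remains.
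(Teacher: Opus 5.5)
Your reduction is correct and is the same as the paper's. After $s=w^2$, your Stieltjes formula is exactly $t^{-1/2}=\frac{2}{\pi}\int_0^\infty\frac{dw}{t+w^2}$, and Corollary~\ref{cor:rational-dd} turns the divided difference into $\frac{2}{\pi}\int_0^\infty f(w)\,dw$ with $f(w)=\frac{1}{w^2+c_0^2}\prod_{b=1}^r\frac{w^2-c_b^2}{w^2+c_b^2}$. But positivity of this integral is the whole content of the lemma, and you do not prove it: each of your three routes ends with a statement of what would need to be shown.

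\begin{itemize}
\item The residue expression you display is not correct. The residue at $ic_a$ produces factors $c_a^2\pm c_b^2$, and your formula is homogeneous of degree $-(r+1)$ instead of $-1$. Even a corrected version is an alternating sum with no visible positivity.
\item In the induction, positivity of the $(r-1)$-integral gives no quantitative bound on the subtracted term $2c_r^2\int\frac{1}{(w^2+c_0^2)(w^2+c_r^2)}\prod_{b<r}\frac{w^2-c_b^2}{w^2+c_b^2}\,dw$, and you supply none.
\item The pairing heuristic rests on the wrong mechanism. The weight $1/(w^2+c_0^2)$ is decreasing for every $c_0$, yet for $r=1$ the divided difference equals $\frac{c_0-c_1}{c_0(c_0+c_1)}$, which is negative when $c_0<c_1$. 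Moreover, for $r=1$ the only positive piece $(c_1,\infty)$ lies to the right of the negative piece $(0,c_1)$, where a decreasing weight hurts rather than helps.
\end{itemize}
What actually matters is that the weight is essentially flat up to the scale $c_0>c_1$, and your sketch has no device that uses this.

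The missing idea is the inversion $w\mapsto c_0^2/w$. It fixes $c_0$, exchanges $(0,c_0)$ and $(c_0,\infty)$, and preserves the measure $\frac{dw}{w^2+c_0^2}$, so it is exactly where the hypothesis $y_0>y_b$ enters. Folding gives
\[
\int_0^\infty f(w)\,dw=\int_{c_0}^\infty\frac{1}{w^2+c_0^2}\Bigl(\prod_{b=1}^rA_b+\prod_{b=1}^rB_b\Bigr)\,dw,
\qquad
A_b=\frac{T-s_b}{T+s_b},\quad B_b=\frac{1-s_bT}{1+s_bT},
\]
where $T=w^2/c_0^2\ge1$ and $s_b=c_b^2/c_0^2\in(0,1)$. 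Over the common positive denominator $(T+s_b)(1+s_bT)$, the numerators of $A_b-B_b$ and $A_b+B_b$ are $2s_b(T^2-1)\ge0$ and $2T(1-s_b^2)\ge0$. Hence $|B_b|\le A_b$, so the bracket is nonnegative. At $T=1$ we have $A_b=B_b>0$, which gives strict positivity near $w=c_0$. This is the paper's proof; without this step, or an equivalent explicit estimate, your proposal ends precisely where the real work begins.
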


\begin{proof}
The case $r=0$ is immediate. Assume $r\ge1$ and set
\[
q(t):=\prod_{b=1}^r(t+y_b),
\qquad
K_r:=
\DD[y_0,\ldots,y_r]
\frac{q(t)}{\sqrt t}.
\]
We use the elementary formula
\[
\frac1{\sqrt t}
=
\frac2\pi
\int_0^\infty
\frac{dv}{t+v^2},
\qquad t>0,
\]
which follows from Euler's beta integral;
see, for example, \cite[Chapter~1]{AndrewsAskeyRoy}.
Since the integral converges absolutely at every positive node and a
divided difference at fixed distinct nodes is a finite linear combination
of node evaluations, we may interchange the finite sum and the integral.
Hence
\[
K_r
=
\frac2\pi
\int_0^\infty
\DD[y_0,\ldots,y_r]
\frac{q(t)}{t+v^2}
\,dv.
\]
By Corollary~\ref{cor:rational-dd},
\[
\DD[y_0,\ldots,y_r]
\frac{q(t)}{t+v^2}
=
\frac{\prod_{b=1}^r(v^2-y_b)}{\prod_{a=0}^r(v^2+y_a)}.
\]
Writing
\[
y_a=x_a^2,
\qquad
x_0>x_1>\cdots>x_r>0,
\]
we obtain
\[
K_r
=
\frac2\pi
\int_0^\infty
f(v)\,dv,
\]
where
\[
f(v)
:=
\frac1{v^2+x_0^2}
\prod_{b=1}^r
\frac{v^2-x_b^2}
     {v^2+x_b^2}.
\]
By substituting $v= x_0^2 / w$,
we have
\[
\int_0^{x_0}
f(v)\,dv
=
\int_{x_0}^{\infty}
f\!\left(\frac{x_0^2}{w}\right)
     \, \frac{x_0^2}{w^2} dw.
\]
Renaming the integration variable \(w\) as \(v\), we obtain
\begin{equation}\label{8.2}
\begin{aligned}
\int_0^\infty f(v)\,dv
&=
\int_{x_0}^\infty
\left(
f(v)
+
\frac{x_0^2}{v^2}
f\!\left(\frac{x_0^2}{v}\right)
\right)\,dv\\
&=
\int_{x_0}^\infty
\frac1{v^2+x_0^2}
\left(
\prod_{b=1}^r
\frac{v^2-x_b^2}
     {v^2+x_b^2}
+
\prod_{b=1}^r
\frac{x_0^4-x_b^2v^2}
     {x_0^4+x_b^2v^2}
\right)\,dv\\
&=
\int_{x_0}^\infty
\frac1{v^2+x_0^2}
\left(
\prod_{b=1}^r A_b
+
\prod_{b=1}^r B_b
\right)\,dv,
\end{aligned}
\end{equation}
where
\[
T:=\frac{v^2}{x_0^2}\ge1,
\qquad
s_b:=\frac{x_b^2}{x_0^2}\in(0,1),
\qquad
A_b:=\frac{T-s_b}{T+s_b},
\qquad
B_b:=\frac{1-s_bT}{1+s_bT}
\]
for $v\ge x_0$ and $b = 1,\dots,r$.
Since \(T\ge1\) and \(0<s_b<1\), we have
\[
2s_b(T^2-1)\ge0, \qquad
2T(1-s_b^2)\ge0.
\]
Thus it follows that
\[
B_b\le A_b,
\qquad
-A_b\le B_b,
\]
and hence
\[
|B_b|\le A_b.
\]
Therefore
\(
\left|
\prod_{b=1}^r B_b
\right|
\le
\prod_{b=1}^r A_b,
\)
and so
\[
\prod_{b=1}^r A_b
+
\prod_{b=1}^r B_b
\ge0.
\]
Thus the integrand in \eqref{8.2} is nonnegative for all $v\ge x_0$.
At $v=x_0$, equivalently $T=1$, we have
\[
A_b=B_b=\frac{1-s_b}{1+s_b}>0,
\]
and hence the integrand in \eqref{8.2} is strictly positive there. By continuity it is positive
on a neighborhood of $x_0$. Hence the integral in \eqref{8.2} is
strictly positive.
Consequently,
\[
K_r
=
\frac2\pi
\int_0^\infty f(v)\,dv
>0,
\]
as desired.
\end{proof}

\subsection{Finite symmetrization and local positivity}\label{sec:local-positive}

To apply Lemma~\ref{lem:dd-positive} to the expression in
Proposition~\ref{prop:generic-local-dd}, we rewrite its numerator as
a positive linear combination of products involving factors
$t-z_b$ and $t+z_b$.
The factors $t-z_b$ can then be eliminated by
Lemma~\ref{lem:node-removal}.
We obtain the required identity using binary Krawtchouk polynomials. For a fixed $s\ge0$
and $0\le q,j\le s$, define
\begin{equation}\label{9.1}
\kappa_q^{(s)}(j)
:=[z^q](1-z)^j(1+z)^{s-j}.
\end{equation}
Equivalently,
\begin{equation}\label{9.2}
\kappa_q^{(s)}(j)
=
\sum_{h=0}^q
(-1)^h\binom{j}{h}\binom{s-j}{q-h}.
\end{equation}
These are the binary Krawtchouk polynomials in the normalization used by
Feinsilver and Kocik; see \cite{FeinsilverKocik}.  Equivalently, they are
the entries of the Krawtchouk matrix of order $s$. 

The following is the standard orthogonality relation for these
polynomials, written in the dual form needed below.  We include the short
derivation for completeness.  Here $\delta_{i,j}$ denotes the Kronecker
delta.

\begin{Lemma}\label{lem:kappa-orthogonality} 
For $0\le i,j\le s$, 
\begin{equation}\label{9.3}
\frac{1}{2^s}
\sum_{q=0}^s 
\frac{\kappa_q^{(s)}(i)\kappa_q^{(s)}(j)} {\binom{s}{q}} = \frac{\delta_{i,j}}{\binom{s}{i}}. 
\end{equation} 
\end{Lemma}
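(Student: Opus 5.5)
The plan is to prove the orthogonality relation by a generating-function computation. It is the classical orthogonality of Krawtchouk polynomials, and the proof goes through the identity $K^2=2^sI$ for the Krawtchouk matrix.

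First, encode the generating function in two variables. For a fixed $j$, \eqref{9.1} gives
\[
\sum_{q=0}^s\kappa_q^{(s)}(j)z^q=(1-z)^j(1+z)^{s-j}.
\]
Summing against $w^j\binom{s}{j}$ yields
\[
\sum_{j}\binom{s}{j}w^j\sum_q\kappa_q^{(s)}(j)z^q=\bigl((1+z)+w(1-z)\bigr)^s.
\]
Expanding the right-hand side as $\bigl((1+w)+z(1-w)\bigr)^s$ and comparing coefficients of $z^qw^j$ gives the symmetry relation
\[
\binom{s}{j}\kappa_q^{(s)}(j)=\binom{s}{q}\kappa_j^{(s)}(q).
\]

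Second, prove the matrix identity $\sum_q\kappa_q^{(s)}(i)\kappa_j^{(s)}(q)=2^s\delta_{i,j}$. Substituting the generating function in $z$ into that in $q$ gives
\[
\sum_q\kappa_q^{(s)}(i)(1-y)^q(1+y)^{s-q}=(1+y)^s\left(1-\frac{1-y}{1+y}\right)^{i}\left(1+\frac{1-y}{1+y}\right)^{s-i}.
\]
The right-hand side simplifies to
\[
(2y)^i\,2^{s-i}=2^sy^i.
\]
Extracting the coefficient of $y^j$ on the left gives $\sum_q\kappa_q^{(s)}(i)\kappa_j^{(s)}(q)$, which proves the identity.

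Third, combine the two steps. Write
\[
\frac{\kappa_q^{(s)}(j)}{\binom{s}{q}}=\frac{\kappa_j^{(s)}(q)}{\binom{s}{j}}
\]
by the symmetry relation. The left-hand side of \eqref{9.3} then becomes
\[
\frac{1}{2^s\binom{s}{j}}\sum_q\kappa_q^{(s)}(i)\kappa_j^{(s)}(q)=\frac{\delta_{i,j}}{\binom{s}{j}}.
\]

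The only delicate point is bookkeeping, namely getting the symmetry relation's binomial weights on the correct side. Otherwise the computation is routine.
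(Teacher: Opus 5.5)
Your proof is correct, but it is organized differently from the paper's. The paper uses one bilinear generating function,
\[
\sum_{j=0}^s\binom{s}{j}(1-z)^j(1+z)^{s-j}(1-w)^j(1+w)^{s-j}=2^s(1+zw)^s,
\]
to obtain the primal orthogonality $K^{\mathsf T}BK=2^sB$, where $K=(\kappa_q^{(s)}(j))_{j,q}$ and $B=\operatorname{diag}\bigl(\binom{s}{0},\ldots,\binom{s}{s}\bigr)$. It then passes to the dual form \eqref{9.3}, namely $KB^{-1}K^{\mathsf T}=2^sB^{-1}$, by inverting both sides. You instead prove two structural properties of the Krawtchouk matrix: the symmetry $BK=K^{\mathsf T}B$ and the involution $K^2=2^sI$. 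You then combine them as $KB^{-1}K^{\mathsf T}=KB^{-1}(BKB^{-1})=K^2B^{-1}=2^sB^{-1}$. The paper's route is shorter, since it needs only a single identity, but it relies on the inversion step to turn orthogonality in one index into orthogonality in the other. Yours costs two generating-function computations but reaches \eqref{9.3} directly and entrywise. Its ingredients are exactly the classical Krawtchouk-matrix facts in the Feinsilver--Kocik normalization that the paper cites. The only cosmetic point is that you end with $\binom{s}{j}$ rather than $\binom{s}{i}$ in the denominator, which is harmless because of the Kronecker delta.
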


\begin{proof}
By the binomial theorem,
\[
\begin{aligned}
 &\quad \sum_{j=0}^s\binom{s}{j}
(1-z)^j(1+z)^{s-j}
(1-w)^j(1+w)^{s-j}\\
& =
\left((1-z)(1-w)+(1+z)(1+w)\right)^s\\
& =
2^s(1+zw)^s.
\end{aligned}
\]
Comparing coefficients of $z^pw^q$ and using \eqref{9.1}, we obtain
\[ 
\sum_{j=0}^s
\binom{s}{j}
\kappa_p^{(s)}(j)\kappa_q^{(s)}(j)
=
2^s\binom{s}{q}\delta_{p,q}.
\]
Let
\[
K=(\kappa_q^{(s)}(j))_{0\le j,q\le s},
\qquad
B=\operatorname{diag}\!\left(
\binom{s}{0},\ldots,\binom{s}{s}
\right).
\]
The preceding relation is
\[
K^{\mathsf T}BK=2^sB.
\]
In particular, $K$ is invertible.  Taking inverses gives
\[
K^{-1}B^{-1}(K^{\mathsf T})^{-1}=2^{-s}B^{-1}.
\]
Multiplying on the left by $K$ and on the right by $K^{\mathsf T}$, we
obtain
\[
KB^{-1}K^{\mathsf T}=2^sB^{-1},
\]
which is precisely \eqref{9.3}.
\end{proof}

We now compute the Krawtchouk expansion of the specific symmetric weights
that occur in the local positivity argument.  This is the specialization
needed for the finite symmetrization identity.

\begin{Lemma} \label{lem:finite-symmetrization}
Let $a>0$ and $s\ge0$.  For $0\le j\le s$, put
\[
W_j:=\frac{(a)_j(a)_{s-j}}{(2a)_s}.
\]
Then
\begin{equation}\label{9.4}
W_j
=
\frac1{2^s}
\sum_{\ell=0}^{\lfloor s/2\rfloor}
\frac{(1/2)_\ell}{(a+1/2)_\ell}
\kappa_{2\ell}^{(s)}(j).
\end{equation}
Consequently, for indeterminates $t,y_1,\ldots,y_s$,
\begin{equation}\label{9.5}
\begin{aligned}
&\sum_{j=0}^s
(a)_j(a)_{s-j}e_{s-j}(y_1,\ldots,y_s)t^j\\
&\quad=
\frac{(2a)_s}{2^s}
\sum_{\substack{L\subseteq\{1,\ldots,s\}\\|L|\text{ even}}}
\frac{(1/2)_{|L|/2}}{(a+1/2)_{|L|/2}}
\prod_{b\in L}(t-y_b)
\prod_{b\notin L}(t+y_b).
\end{aligned}
\end{equation}
\end{Lemma}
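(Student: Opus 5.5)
The plan is to prove \eqref{9.4} by inverting the Krawtchouk transform with Lemma~\ref{lem:kappa-orthogonality}, and then to obtain \eqref{9.5} by expanding a product of factors $t\pm y_b$.

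\textbf{The inversion.} Multiplying \eqref{9.3} by $\binom{s}{i}$ and summing against an arbitrary vector shows the following. A vector $(W_j)_{0\le j\le s}$ satisfies
\[
W_j=\frac1{2^s}\sum_q c_q\,\kappa_q^{(s)}(j)
\qquad\text{if and only if}\qquad
c_q=\frac1{\binom{s}{q}}\sum_{i=0}^s\binom{s}{i}W_i\,\kappa_q^{(s)}(i).
\]
So it suffices to compute the transform $\sum_i\binom{s}{i}W_i\kappa_q^{(s)}(i)$ for our specific $W_i$.

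\textbf{Computing the transform.} We use the generating function
\[
\sum_q \kappa_q^{(s)}(i)z^q=(1-z)^i(1+z)^{s-i}.
\]
This gives
\[
G(z):=\sum_i\binom{s}{i}\frac{(a)_i(a)_{s-i}}{(2a)_s}(1-z)^i(1+z)^{s-i}.
\]
Now use the Beta-integral representation
\[
\binom{s}{i}\frac{(a)_i(a)_{s-i}}{(2a)_s}=\int_0^1 \binom{s}{i}p^i(1-p)^{s-i}\,d\mu(p),
\qquad d\mu=\frac{p^{a-1}(1-p)^{a-1}}{B(a,a)}\,dp.
\]
Summing over $i$ then gives
\[
G(z)=\int_0^1\bigl((1+z)-2pz\bigr)^s\,d\mu(p)=\int_0^1 \bigl(1+(1-2p)z\bigr)^s\,d\mu(p).
\]
Since $\mu$ is symmetric under $p\mapsto1-p$, the odd powers of $(1-2p)$ integrate to $0$. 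Hence
\[
[z^q]G=\binom{s}{q}\,\mathbb E_\mu\bigl[(1-2p)^q\bigr],
\]
which vanishes for odd $q$. For $q=2\ell$, substitute $u=(1-2p)^2$, so that $u$ has a $\mathrm{Beta}(1/2,a)$ distribution. This gives
\[
\mathbb E[u^\ell]=\frac{(1/2)_\ell}{(a+1/2)_\ell}.
\]
Therefore $c_{2\ell}=(1/2)_\ell/(a+1/2)_\ell$ and $c_q=0$ for odd $q$, which is exactly \eqref{9.4}.

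Both integral facts, the Beta representation of $\binom{s}{i}(a)_i(a)_{s-i}/(2a)_s$ and the even moments of $(1-2p)$, are the step I expect to need the most care. If one prefers to avoid integrals, they can be checked directly as terminating Chu--Vandermonde or Pfaff--Saalschütz identities, but the integral route is cleanest.

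\textbf{Deriving \eqref{9.5}.} Expand $\prod_{b\in L}(t-y_b)\prod_{b\notin L}(t+y_b)$ and collect the terms whose $y$-part is $\prod_{b\in A}y_b$ with $|A|=s-j$. The coefficient of $t^j\prod_{b\in A}y_b$ is $(-1)^{|A\cap L|}$. Summing over all $L$ with $|L|=q$ gives
\[
\sum_{h}(-1)^h\binom{s-j}{h}\binom{j}{q-h}.
\]
This equals $\kappa_q^{(s)}(s-j)$, and since $q$ is even it also equals $\kappa_q^{(s)}(j)$. Indeed, $\kappa_q^{(s)}(s-j)=(-1)^q\kappa_q^{(s)}(j)$ by replacing $z$ with $-z$ in \eqref{9.1}.

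Hence the right-hand side of \eqref{9.5} is
\[
\frac{(2a)_s}{2^s}\sum_j\sum_\ell \frac{(1/2)_\ell}{(a+1/2)_\ell}\kappa_{2\ell}^{(s)}(j)\,e_{s-j}(y)\,t^j=(2a)_s\sum_j W_j\,e_{s-j}(y)\,t^j.
\]
The second equality is \eqref{9.4}, and since $(2a)_sW_j=(a)_j(a)_{s-j}$ this is the left-hand side of \eqref{9.5}.
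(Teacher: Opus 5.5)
Your proof is correct, and its outline matches the paper's. You invert the Krawtchouk transform with Lemma~\ref{lem:kappa-orthogonality}, compute $\widehat W_q=\sum_j\binom{s}{j}W_j\kappa_q^{(s)}(j)$ from its generating function in $z$, and derive \eqref{9.5} by the same sign count $\sum_h(-1)^h\binom{s-j}{h}\binom{j}{q-h}=\kappa_q^{(s)}(s-j)=\kappa_q^{(s)}(j)$ for even $q$.

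The one real difference is how you evaluate $\sum_j\binom{s}{j}W_j(1-z)^j(1+z)^{s-j}$. The paper stays within formal power series. It writes $\sum_j\binom{s}{j}(a)_j(a)_{s-j}X^jY^{s-j}=s!\,[T^s](1-XT)^{-a}(1-YT)^{-a}$. Setting $X=1-z$, $Y=1+z$ collapses the product to $\bigl((1-T)^2-z^2T^2\bigr)^{-a}$. It then expands in $z^2T^2/(1-T)^2$, reads off that only even powers of $z$ occur, and finishes with the duplication formulas for $(2a)_{2\ell}$ and $(2\ell)!$.

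You instead recognize $\binom{s}{j}W_j$ as a Beta-binomial law, so the generating function becomes the average of $\bigl(1+(1-2p)z\bigr)^s$ against the $\mathrm{Beta}(a,a)$ density. The vanishing of the odd coefficients is then the symmetry $p\mapsto 1-p$. The even coefficients are the moments of $(1-2p)^2\sim\mathrm{Beta}(1/2,a)$, so the duplication formula is absorbed into a change of variables.

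Your route is more conceptual: it explains where the weights $(1/2)_\ell/(a+1/2)_\ell$ come from. It does genuinely need $a>0$ for the integrals to converge, which is harmless here since it is a hypothesis, and the application uses $a=m-s+\frac12>0$. The paper's computation is purely formal and yields \eqref{9.4} as an identity of rational functions in $a$, with no convergence considerations.
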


\begin{proof}
We first prove \eqref{9.4}.  Consider the (unnormalized) Krawtchouk
transform
\[
\widehat W_q
:=
\sum_{j=0}^s\binom{s}{j}W_j\kappa_q^{(s)}(j).
\]
By \eqref{9.1},
\begin{equation}\label{9.6}
\sum_{q=0}^s\widehat W_qz^q
=
\sum_{j=0}^s
\binom{s}{j}
\frac{(a)_j(a)_{s-j}}{(2a)_s}
(1-z)^j(1+z)^{s-j}.
\end{equation}
Using the formal power series
\[
(1-XT)^{-a}
=
\sum_{n\ge0}\frac{(a)_n}{n!}X^nT^n,
\]
we obtain, for arbitrary $X,Y$,
\begin{equation}\label{9.7}
\begin{aligned}
\sum_{j=0}^s
\binom{s}{j}(a)_j(a)_{s-j}X^jY^{s-j}
&=
s! 
\sum_{j=0}^s
\frac{(a)_j}{j!}X^j\frac{(a)_{s-j}}{(s-j)!}Y^{s-j}
\\
&=
s![T^s](1-XT)^{-a}(1-YT)^{-a}.
\end{aligned}
\end{equation}
Substituting $X=1-z$ and $Y=1+z$ into \eqref{9.7}, 
expression \eqref{9.6} becomes
\[
\frac{s!}{(2a)_s}[T^s]
\left((1-T)^2-z^2T^2\right)^{-a}.
\]
Applying  
\[
(1-X)^{-a}
=
\sum_{\ell \ge0}\frac{(a)_\ell}{\ell!}X^\ell,
\]
for $X = \frac{z^2T^2}{(1-T)^2}$, we have
\[
\begin{aligned}
\left((1-T)^2-z^2T^2\right)^{-a}
&=
(1-T)^{-2a}
\left(1-\frac{z^2T^2}{(1-T)^2}\right)^{-a}\\
&=
\sum_{\ell\ge0}
\frac{(a)_\ell}{\ell!}
z^{2\ell}T^{2\ell}(1-T)^{-2a-2\ell}.
\end{aligned}
\]
Since only even powers of \(z\) occur in the above expansion,
\[
\widehat W_{2\ell+1}=0.
\]
In addition, for $0\le2\ell\le s$, 
since
\[
[T^s] T^{2\ell}(1-T)^{-2a-2\ell}
=
[T^{s-2\ell }] \sum_{n \ge0}\frac{(2 a + 2\ell)_n}{n!}T^n
=
\frac{(2 a + 2\ell)_{s-2\ell}}{(s-2\ell)!}
\]
and since
\[
(2a)_s=(2a)_{2\ell}(2a+2\ell)_{s-2\ell},
\]
we have
\[
\widehat W_{2\ell}
=
\frac{s!}{(2a)_s}
\frac{(a)_\ell}{\ell!}
\frac{(2a+2\ell)_{s-2\ell}}{(s-2\ell)!}\\
=
\frac{s!}{(s-2\ell)!}
\frac{(a)_\ell}
     {\ell!\,(2a)_{2\ell}}
=
\binom{s}{2\ell}
\frac{(2\ell)!}{\ell!}
\frac{(a)_\ell}
     {(2a)_{2\ell}}.
\]
Using the duplication identities
\[
(2a)_{2\ell}
=
2^{2\ell}(a)_\ell
\left(a+\frac12\right)_\ell,
\qquad
(2\ell)!
=
2^{2\ell}\ell!
\left(\frac12\right)_\ell,
\]
it follows that
\[
\frac{(2\ell)!}{\ell!}
\frac{(a)_\ell}
     {(2a)_{2\ell}}
=
\frac{
2^{2\ell}(\frac12)_\ell (a)_\ell
}{
2^{2\ell}(a)_\ell (a+\frac12)_\ell
}
=
\frac{(\frac12)_\ell}
     {(a+\frac12)_\ell}.
\]
Hence
\begin{equation}\label{9.8}
\widehat W_{2\ell}
=
\binom{s}{2\ell}
\frac{(\frac12)_\ell}
     {(a+\frac12)_\ell}.
\end{equation}
Then
\[
\begin{aligned}   
\sum_{q=0}^s
\frac{\kappa_q^{(s)}(i)}{2^s\binom{s}{q}}
\widehat W_q
&=
\sum_{q=0}^s
\frac{\kappa_q^{(s)}(i)}{2^s\binom{s}{q}}
\sum_{j=0}^s\binom{s}{j}W_j\kappa_q^{(s)}(j)\\
&=
\sum_{j=0}^s
\binom{s}{j}W_j
\sum_{q=0}^s
\frac{\kappa_q^{(s)}(i)\kappa_q^{(s)}(j)}{2^s\binom{s}{q} }.
\end{aligned}
\]
By Lemma~\ref{lem:kappa-orthogonality},
this equals 
\[
\sum_{j=0}^s
\binom{s}{j}W_j
\frac{\delta_{i,j}}{\binom{s}{i}}=W_i .
\]
Substituting $\widehat W_{2\ell +1} =0$ and  \eqref{9.8},
we have
\[
W_i 
= 
\sum_{q=0}^s
\frac{\kappa_q^{(s)}(i)}{2^s\binom{s}{q}}
\widehat W_q
=
\sum_{\ell=0}^{\lfloor s/2\rfloor}
\frac{\kappa_{2\ell}^{(s)}(i)}{2^s\binom{s}{2\ell}}
\widehat W_{2\ell}
=
\sum_{\ell=0}^{\lfloor s/2\rfloor}
\frac{\kappa_{2\ell}^{(s)}(i)}{2^s}
\frac{(\frac12)_\ell}
     {(a+\frac12)_\ell}
\]
which yields \eqref{9.4}.

We next prove \eqref{9.5}. 
For $L\subseteq\{1,\ldots,s\}$,
set
\[
F_L(t,y)
:=
\prod_{b\in L}(t-y_b)
\prod_{b\notin L}(t+y_b).
\]
Fix \(j\) and \(\ell\), and consider
\[
\sum_{|L|=2\ell}F_L(t,y).
\]
This polynomial is symmetric in \(y_1,\ldots,y_s\), and it is multi-affine in these variables. Its coefficient of \(t^j\) is therefore a symmetric multi-affine homogeneous polynomial of degree \(s-j\). Every monomial in this coefficient is a product of \(s-j\) distinct variables, and symmetry forces all such monomials to have the same coefficient. Hence it is a scalar multiple of \(e_{s-j}(y_1,\ldots,y_s)\).

To obtain a monomial contributing to
\(e_{s-j}(y)t^j\), we must choose \(s-j\) of the
\(s\) factors to contribute a \(y\)-term and the
remaining \(j\) factors to contribute \(t\).
Suppose that among the chosen \(s-j\) factors,
exactly \(h\) belong to \(L\).
Each such choice contributes the sign \((-1)^h\).
The number of possibilities is
\[
\binom{s-j}{h}\binom{j}{2\ell-h},
\]
since \(h\) indices of \(L\) must lie among the
\(s-j\) chosen \(y\)-positions and the remaining
\(2\ell-h\) indices of \(L\) among the \(j\)
chosen \(t\)-positions.
Hence the coefficient of
\(e_{s-j}(y)t^j\) in
$\sum_{|L|=2\ell}F_L(t,y)$
is
\[
\sum_h
(-1)^h
\binom{s-j}{h}
\binom{j}{2\ell-h}.
\]
By the defining formula \eqref{9.2}, this equals
$\kappa_{2\ell}^{(s)}(s-j)$.
Since
\[
\kappa_q^{(s)}(s-j)
=
(-1)^q\kappa_q^{(s)}(j),
\]
and \(q=2\ell\) is even, we obtain
\[
\kappa_{2\ell}^{(s)}(s-j)
=
\kappa_{2\ell}^{(s)}(j).
\]
Therefore the coefficient of
\(e_{s-j}(y)t^j\) on the right-hand side of
\eqref{9.5} is
\[
\frac{(2a)_s}{2^s}
\sum_{\ell=0}^{\lfloor s/2\rfloor}
\frac{(1/2)_\ell}{(a+1/2)_\ell}
\kappa_{2\ell}^{(s)}(j).
\]
By \eqref{9.4}, this equals
\[
(2a)_sW_j
=
(a)_j(a)_{s-j}.
\]
Hence the coefficients of
\(e_{s-j}(y_1,\ldots,y_s)t^j\)
coincide on both sides of \eqref{9.5}, which proves
the identity.
\end{proof}

\begin{Theorem}\label{thm:node-positivity}
Let $m\ge1$ and let $X,x_0,\ldots,x_{m-1}$
be pairwise distinct positive real numbers satisfying $X>x_j$ for every
$j$.  Then $\tau_{\mathbf x}(I)>0$ for every $I\subseteq S_m$.
Consequently, every coefficient of
\[
\mathcal M_{\mathbf x}(y)
=
(1-y)^{2m}
\Omega_{\mathbf x}\left(\frac{y}{(1-y)^2}\right)
\]
is strictly positive.
\end{Theorem}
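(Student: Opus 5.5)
The plan is to combine Proposition~\ref{prop:generic-local-dd}, the finite symmetrization identity \eqref{9.5}, the node-removal Lemma~\ref{lem:node-removal}, and the positivity Lemma~\ref{lem:dd-positive}.

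Fix $I=\{i_1<\cdots<i_s\}\subseteq S_m$ and write $z_0=X^2$, $z_b=x_{i_b}^2$. If $s=0$, then $\tau_{\mathbf x}(\emptyset)=\omega_{\mathbf x}(\emptyset)=X\cdot X^{-1}=1>0$. So assume $s\ge1$. Put $a=m-s+\tfrac12>0$. By Proposition~\ref{prop:generic-local-dd},
\[
\tau_{\mathbf x}(I)=XC_{m,s}\,\DD[z_0,\ldots,z_s]\frac{N(t)}{\sqrt t},
\qquad
N(t)=\sum_{j=0}^s(a)_j(a)_{s-j}e_{s-j}(z_1,\ldots,z_s)t^j,
\]
and $C_{m,s}>0$. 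Apply \eqref{9.5} with $y_b=z_b$. This rewrites $N(t)$ as a combination of the products $F_L(t)=\prod_{b\in L}(t-z_b)\prod_{b\notin L}(t+z_b)$, taken over even subsets $L\subseteq\{1,\ldots,s\}$. The coefficients are $\frac{(2a)_s}{2^s}\frac{(1/2)_{|L|/2}}{(a+1/2)_{|L|/2}}$, all strictly positive because $a>0$. By linearity of divided differences it then suffices to show that
\[
\DD[z_0,\ldots,z_s]\frac{F_L(t)}{\sqrt t}>0
\]
for each such $L$. Strict positivity of $\tau_{\mathbf x}(I)$ will follow, since the $L=\emptyset$ term alone already has a positive coefficient.

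For fixed $L$, Lemma~\ref{lem:node-removal} with $h(t)=\prod_{b\notin L}(t+z_b)/\sqrt t$ removes the nodes $z_b$ for $b\in L$. This gives
\[
\DD[z_0,(z_b)_{b\notin L}]\frac{\prod_{b\notin L}(t+z_b)}{\sqrt t}.
\]
The remaining nodes are positive and pairwise distinct, and $z_0=X^2$ is strictly the largest, because $X>x_j$. After relabelling the remaining $z_b$ in decreasing order, we are exactly in the setting of Lemma~\ref{lem:dd-positive} with $y_0=z_0$ and $r=s-|L|$. That lemma requires only $y_0$ to be the maximum, and the product runs over exactly the non-distinguished nodes, so the expression is strictly positive. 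The case $r=0$ is also covered, since it is then the value $\sqrt{z_0}>0$. Hence $\tau_{\mathbf x}(I)>0$.

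For the consequence, Proposition~\ref{prop:generic-localization} gives $d_s=\sum_{|I|=s}\tau_{\mathbf x}(I)$ for $0\le s\le m$. Each such sum is nonempty because $s\le m=|S_m|$, so $d_s>0$ for these $s$. The symmetry $d_s=d_{2m-s}$ then yields positivity for $m<s\le 2m$ as well.

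The main points to verify are that the symmetrization weights in \eqref{9.5} are genuinely positive, and that after node removal the distinguished node $z_0$ is still the maximum. Both follow directly from $a>0$ and $X>x_j$, so no serious obstacle is expected.
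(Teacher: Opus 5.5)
Your proposal is correct and follows the paper's proof essentially step for step: you expand the numerator of Proposition~\ref{prop:generic-local-dd} via \eqref{9.5} with positive weights, remove the factors $t-z_b$ by Lemma~\ref{lem:node-removal}, apply Lemma~\ref{lem:dd-positive} with $z_0=X^2$ as the distinguished maximal node, and then conclude via Proposition~\ref{prop:generic-localization} and the symmetry $d_s=d_{2m-s}$. One trivial slip: when $r=0$ the remaining divided difference is $\DD[z_0]t^{-1/2}=1/\sqrt{z_0}$, not $\sqrt{z_0}$, though it is of course still positive.
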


\begin{proof}
Fix $I=\{i_1<\cdots<i_s\}\subseteq S_m$ and use the notation of
Proposition~\ref{prop:generic-local-dd}.  In particular,
\[
a=m-s+\frac12>0.
\]
Applying Lemma~\ref{lem:finite-symmetrization} to the numerator in
\eqref{eq:generic-local-dd}, and then using
Lemma~\ref{lem:node-removal}, gives
\[
\begin{aligned}
\tau_{\mathbf x}(I)
&=
\frac{XC_{m,s}(2a)_s}{2^s}
\sum_{\substack{L\subseteq\{1,\ldots,s\}\\|L|\text{ even}}}
\frac{(1/2)_{|L|/2}}{(a+1/2)_{|L|/2}} 
\DD[z_0,(z_b)_{b\notin L}]
\frac{\prod_{b\notin L}(t+z_b)}{\sqrt t}.
\end{aligned}
\]
The distinguished node $z_0=X^2$ is larger than all remaining positive
nodes.  By symmetry of divided differences, the latter may be arranged in
decreasing order, and Lemma~\ref{lem:dd-positive} shows that every divided
difference in the sum is strictly positive.  All other factors are positive
as well.  
Hence $\tau_{\mathbf x}(I)>0$.

Proposition~\ref{prop:generic-localization} now gives $d_s>0$ for
$0\le s\le m$, and the symmetry $d_s=d_{2m-s}$ gives the result for all
coefficients.
\end{proof}

\section{Applications}\label{sec:applications}

We first apply Theorem~\ref{thm:node-positivity} to the nodes
appearing in Proposition~\ref{prop:unified-local-expansion}.

\begin{Theorem}\label{thm:Q-magic}
For every $d\ge2$, all magic coefficients of $L_{Q_d}(k)$ are strictly
positive.
\end{Theorem}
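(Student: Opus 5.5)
Write $d=2m+\varepsilon$ with $m\ge1$ and $\varepsilon\in\{0,1\}$; this covers every $d\ge2$. By Proposition~\ref{prop:Q-factorization},
\[
L_{Q_d}(k)=(2k+1)^\varepsilon C_{m,\varepsilon}(k(k+1)).
\]
By Proposition~\ref{prop:unified-local-expansion}, $C_{m,\varepsilon}(u)=\sum_{J\subseteq S_m}\omega_{m,\varepsilon}(J)u^{|J|}$. The plan is to identify this expansion with $\Omega_{\mathbf x}(u)$ for a suitable choice of nodes, and then invoke Theorem~\ref{thm:node-positivity}.

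Take
\[
X=\lambda_m=m+\alpha,\qquad x_j=\lambda_j=j+\alpha\quad(j\in S_m).
\]
These are pairwise distinct positive reals with $X>x_j$ for every $j$. Comparing \eqref{eq:parity-weight} with \eqref{eq:generic-weight} gives $\omega_{m,\varepsilon}(J)=\omega_{\mathbf x}(J)$ for every $J\subseteq S_m$, since both equal $4^{|J|}X\,\DD[X^2,(x_j^2)_{j\in J}]t^{|J|-1/2}$. Hence $C_{m,\varepsilon}=\Omega_{\mathbf x}$. Theorem~\ref{thm:node-positivity} then says that every coefficient of
\[
\mathcal M_{\mathbf x}(y)=(1-y)^{2m}C_{m,\varepsilon}\bigl(y/(1-y)^2\bigr)
\]
is strictly positive, for $0\le s\le 2m$. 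Note that $k(k+1)$ becomes $y/(1-y)^2$ under $k=y/(1-y)$. So for $\varepsilon=0$ this polynomial is exactly the magic transform $M_{L_{Q_{2m}}}(y)$, which finishes that case.

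For $\varepsilon=1$ we have $\deg L=2m+1$, and the magic transform factors as
\[
M_L(y)=(1-y)^{2m+1}\Bigl(\frac{2y}{1-y}+1\Bigr)\,C_{m,1}\bigl(y/(1-y)^2\bigr)=(1+y)\,\mathcal M_{\mathbf x}(y).
\]
This is the magic-transform form of the observation after Proposition~\ref{prop:smooth-classification} that $2k+1=k+(k+1)$, equivalently the multiplicativity used in Lemma~\ref{lem:closure}(3). The polynomial $\mathcal M_{\mathbf x}(y)$ has degree $2m$ and all coefficients of $y^0,\dots,y^{2m}$ strictly positive, so multiplying by $1+y$ gives strictly positive coefficients for $y^0,\dots,y^{2m+1}$. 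Thus all $2m+2$ magic coefficients are positive.

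No step here is genuinely hard, since all the analytic work sits in Theorem~\ref{thm:node-positivity}. The points to check carefully are two pieces of bookkeeping. First, the degree normalization: the magic transform uses $(1-y)^{d}$, and this splits as $(1-y)^{2m}$ for $C_{m,\varepsilon}$ times $(1-y)^\varepsilon$ for the factor $(2k+1)^\varepsilon$. Second, the resulting polynomial must have exact degree $d$, so that the strictly positive coefficients account for all $d+1$ magic coefficients. This holds because the top coefficient $d_{2m}=d_0=1$ is nonzero by the symmetry in Proposition~\ref{prop:generic-localization}.
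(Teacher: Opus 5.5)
Your proposal is correct and is essentially the paper's proof: you specialize Theorem~\ref{thm:node-positivity} to the nodes $X=m+\alpha$, $x_j=j+\alpha$, identify $C_{m,\varepsilon}=\Omega_{\mathbf x}$ via Proposition~\ref{prop:unified-local-expansion}, and use Proposition~\ref{prop:Q-factorization} to write the magic transform as $(1+y)^\varepsilon\mathcal M_{\mathbf x}(y)$. Your extra degree check is harmless but not needed, since Theorem~\ref{thm:node-positivity} already gives strict positivity of every coefficient $d_0,\ldots,d_{2m}$, including the top one.
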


\begin{proof}
Write
\[
d=2m+\varepsilon,
\qquad
\varepsilon\in\{0,1\},
\qquad
\alpha=\frac{1+\varepsilon}{2}.
\]
Apply Theorem~\ref{thm:node-positivity} with
\[
X=m+\alpha,
\qquad
x_j=j+\alpha
\quad(0\le j\le m-1).
\]
Proposition~\ref{prop:unified-local-expansion} gives
\[
C_{m,\varepsilon}(u)=\Omega_{\mathbf x}(u).
\]
Hence
\[
H_{m,\varepsilon}(y)
:=
(1-y)^{2m}
C_{m,\varepsilon}\left(\frac{y}{(1-y)^2}\right)
\]
has strictly positive coefficients.  By
Proposition~\ref{prop:Q-factorization},
\[
(1-y)^{2m+\varepsilon}
L_{Q_{2m+\varepsilon}}\left(\frac{y}{1-y}\right)
=
(1+y)^\varepsilon
H_{m,\varepsilon}(y).
\]
Since $\varepsilon$ is either $0$ or $1$, the right-hand side has strictly
positive coefficients.  These are precisely the magic coefficients of
$L_{Q_d}(k)$.
\end{proof}

Since $V_{2m}^\vee=Q_{2m}$, the del Pezzo part of the classification is now
settled.  We next treat the pseudo-del Pezzo polytope.

\begin{Proposition}\label{prop:pseudo-average}
For every $m\ge1$,
\[
L_{\widetilde V_{2m}^\vee}(k)
=
\frac{(2k+1)^{2m}+L_{V_{2m}^\vee}(k)}2.
\]
\end{Proposition}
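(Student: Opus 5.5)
First compute the polar duals explicitly. Since $\widetilde V_{2m}$ has vertices $\pm e_1,\ldots,\pm e_{2m}$ and $-(e_1+\cdots+e_{2m})$, its polar dual is
\[
\widetilde V_{2m}^\vee=\{x\in[-1,1]^{2m}:\ x_1+\cdots+x_{2m}\ge -1\},
\]
and likewise $V_{2m}^\vee=Q_{2m}=\{x\in[-1,1]^{2m}: -1\le x_1+\cdots+x_{2m}\le 1\}$. For a positive integer $k$, write $S(x)=x_1+\cdots+x_{2m}$ and consider the lattice points of the cube $[-k,k]^{2m}$, of which there are $(2k+1)^{2m}$. We partition them into three classes: $A_-=\{S<-k\}$, $A_0=\{-k\le S\le k\}$, and $A_+=\{S>k\}$.

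By definition, $L_{V_{2m}^\vee}(k)=|A_0|$ and $L_{\widetilde V_{2m}^\vee}(k)=|A_0|+|A_+|$. The involution $x\mapsto -x$ preserves the cube and sends $S$ to $-S$, so it exchanges $A_-$ and $A_+$; hence $|A_+|=|A_-|$. Combining this with $|A_-|+|A_0|+|A_+|=(2k+1)^{2m}$ gives
\[
|A_+|=\frac{(2k+1)^{2m}-L_{V_{2m}^\vee}(k)}{2}.
\]
Adding $|A_0|=L_{V_{2m}^\vee}(k)$ yields the stated formula for every positive integer $k$. At $k=0$ both sides equal $1$.

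Both sides are polynomials in $k$ and they agree at infinitely many values, so the identity holds as polynomials.

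There is no real obstacle here. The only point to check carefully is that the dilate $k\widetilde V_{2m}^\vee$ is cut out by $x\in[-k,k]^{2m}$ and $S(x)\ge -k$, with the correct direction of the inequality. Under the symmetry argument the direction is irrelevant anyway.
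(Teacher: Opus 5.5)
Your proposal is correct and is essentially the paper's proof: it uses the same description $\widetilde V_{2m}^\vee=\{x\in[-1,1]^{2m}: x_1+\cdots+x_{2m}\ge-1\}$, the same three-way split of the lattice points of $[-k,k]^{2m}$ by coordinate sum, and the same involution $z\mapsto -z$ to match the two outer regions. The paper treats $k\ge0$ directly and leaves the passage to a polynomial identity implicit, whereas you spell out the $k=0$ case and the infinitely-many-values argument.
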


\begin{proof}
Put $d=2m$.  From the definition of $\widetilde V_d$,
\[
\widetilde V_d^\vee
=
\{x\in[-1,1]^d:x_1+\cdots+x_d\ge-1\}.
\]
Thus $L_{\widetilde V_d^\vee}(k)$ counts vectors
$z\in[-k,k]^d\cap\ZZ^d$ satisfying
\[
z_1+\cdots+z_d\ge-k.
\]
Let $T=(2k+1)^d$ and let
\[
B
=
\#\{z\in[-k,k]^d\cap\ZZ^d:
-k\le z_1+\cdots+z_d\le k\}.
\]
The central band is $kV_d^\vee$, so $B=L_{V_d^\vee}(k)$.  The involution
$z\mapsto-z$ exchanges the regions where the coordinate sum is greater
than $k$ and less than $-k$.  If their common cardinality is $C$, then
\[
T=B+2C,
\qquad
L_{\widetilde V_d^\vee}(k)=B+C,
\]
which gives the formula.
\end{proof}

\begin{Theorem}\label{thm:building-blocks}
For every $m\ge1$, all magic coefficients of
$L_{V_{2m}^\vee}(k)$ and $L_{\widetilde V_{2m}^\vee}(k)$
are strictly positive.
\end{Theorem}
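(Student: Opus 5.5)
The del Pezzo part is already done: since $V_{2m}^\vee=Q_{2m}$, Theorem~\ref{thm:Q-magic} with $d=2m\ge2$ shows that all magic coefficients of $L_{V_{2m}^\vee}(k)$ are strictly positive. It remains to treat $\widetilde V_{2m}^\vee$. The plan is to use the averaging identity of Proposition~\ref{prop:pseudo-average}, together with linearity of the magic transform.

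The magic transform $L\mapsto M_L(y)=(1-y)^dL\bigl(y/(1-y)\bigr)$ is linear on polynomials of degree at most $d$, here with $d=2m$. Both $(2k+1)^{2m}$ and $L_{V_{2m}^\vee}(k)$ have degree exactly $2m$, so
\[
M_{L_{\widetilde V_{2m}^\vee}}(y)=\tfrac12\Bigl(M_{(2k+1)^{2m}}(y)+M_{L_{V_{2m}^\vee}}(y)\Bigr).
\]
Since $2k+1=k+(k+1)$, we have $(2k+1)^{2m}=\sum_{i}\binom{2m}{i}k^i(k+1)^{2m-i}$. Hence its magic transform is $(1+y)^{2m}$, and all of its coefficients are strictly positive. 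Equivalently, this is the product case of Lemma~\ref{lem:closure} applied to $2m$ copies of the interval, or the cube case $A_i^\perp=\binom{2m}{i}$ of Proposition~\ref{prop:crosspolytope-magic}.

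Combining the two pieces, each magic coefficient of $L_{\widetilde V_{2m}^\vee}(k)$ is the average of $\binom{2m}{i}>0$ and the corresponding magic coefficient of $L_{V_{2m}^\vee}(k)$, which is also $>0$. Therefore every magic coefficient of $L_{\widetilde V_{2m}^\vee}(k)$ is strictly positive.

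There is no real obstacle here. The substantive positivity input was already established in Theorem~\ref{thm:node-positivity}, and this statement only requires the linearity check and the degree bookkeeping above.
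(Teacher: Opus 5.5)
Your proposal is correct and follows the paper's proof exactly: the del Pezzo case comes from Theorem~\ref{thm:Q-magic} via $V_{2m}^\vee=Q_{2m}$. The pseudo-del Pezzo case then follows from Proposition~\ref{prop:pseudo-average} and the expansion $(2k+1)^{2m}=(k+(k+1))^{2m}$, which give $\widetilde a_i=\bigl(\binom{2m}{i}+a_i\bigr)/2>0$.
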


\begin{proof}
Let $a_i$ and $\widetilde a_i$ denote the magic coefficients of
$L_{V_{2m}^\vee}(k)$ and $L_{\widetilde V_{2m}^\vee}(k)$,
respectively.
By Theorem~\ref{thm:Q-magic}, we have $a_i>0$ for $0\le i\le2m$.
Since $(2k+1)^{2m}=(k+(k+1))^{2m}$,
Proposition~\ref{prop:pseudo-average} gives
\[
\widetilde a_i
=
\frac{\binom{2m}{i}+a_i}{2}>0
\qquad(0\le i\le2m).
\]
\end{proof}

We now prove Theorem~\ref{thm:pseudo-symmetric-main}.
By Proposition~\ref{prop:classification}, every pseudo-symmetric
simplicial reflexive polytope $P$ is lattice equivalent to a free sum
of a centrally symmetric reflexive crosspolytope and del Pezzo and
pseudo-del Pezzo factors.
Taking polar duals turns this decomposition into a Cartesian product.
Proposition~\ref{prop:crosspolytope-magic},
Theorem~\ref{thm:building-blocks}, and Lemma~\ref{lem:closure}
therefore show that $L_{P^\vee}(k)$ is magic positive.

Suppose now that $P$ is smooth Fano.
By Proposition~\ref{prop:smooth-classification}, $P$ is a free sum of
copies of $[-1,1]$, del Pezzo polytopes, and pseudo-del Pezzo polytopes.
The two magic coefficients of
\[
L_{[-1,1]}(k)=2k+1=k+(k+1)
\]
are both equal to $1$, while
Theorem~\ref{thm:building-blocks} gives strict positivity for the other
factors.
The strict version of Lemma~\ref{lem:closure}(3) therefore shows that
all magic coefficients of $L_{P^\vee}(k)$ are strictly positive.
The lattice equivalence $\PP_{C_n}^\vee\cong Q_{n-1}$
\cite[Proposition~4.6 and its proof]{KonoikeCycles} gives
Theorem~\ref{thm:cycles} from Theorem~\ref{thm:Q-magic}.
Since the polar duals appearing in both theorems are reflexive,
Corollaries~\ref{cor:fano-consequences} and~\ref{cor:cycle-consequences}
follow from the implications of magic positivity recalled in the
Introduction.

\subsection{Further questions}

Theorem~\ref{thm:node-positivity} gives a local decomposition
\[
d_s
=
\sum_{\substack{I\subseteq S_m\\|I|=s}}
\tau_{\mathbf x}(I),
\qquad
\tau_{\mathbf x}(I)>0
\qquad (0\le s\le m).
\]
The remaining coefficients are determined by the symmetry
$d_s=d_{2m-s}$.
It would be interesting to find a direct combinatorial interpretation of
these local coefficients in the arithmetic-node specializations arising
from del Pezzo polytopes and cycles.

\begin{Question}
Do the local coefficients $\tau_{\mathbf x}(I)$ arising from
$V_{2m}^\vee$ admit a natural weighted combinatorial interpretation
related to its quadratic triangulations?
\end{Question}

\appendix

\section{A missing argument in a proof in  \cite{OhsugiTsuchiyaLocally}}
\label{app:previous-proof}

In \cite[Theorem~5.8]{OhsugiTsuchiyaLocally}, we stated that the
$h^*$-polynomial of every pseudo-symmetric simplicial reflexive polytope
is $\gamma$-positive.
The proof there uses Nill's decomposition into a centrally symmetric
reflexive crosspolytope, del Pezzo polytopes, and pseudo-del Pezzo
polytopes.
However, the centrally symmetric reflexive crosspolytope factor was
inadvertently treated as a standard crosspolytope.
We supply the missing argument here.

\begin{Proposition}\label{prop:crosspolytope-gamma}
Let $R$ be a $d$-dimensional centrally symmetric reflexive
crosspolytope, and let $C\subseteq\mathbb F_2^d$ be the even binary
code associated with $R$ as in Section~\ref{sec:crosspolytope}.
For $0\le2j\le d$, let
\[
A_{2j}
=
\#\{c\in C:\operatorname{wt}(c)=2j\}.
\]
Then
\[
h^*(R;t)
=
\sum_{j=0}^{\lfloor d/2\rfloor}
4^jA_{2j}t^j(1+t)^{d-2j}.
\]
In particular, $h^*(R;t)$ is $\gamma$-positive.
\end{Proposition}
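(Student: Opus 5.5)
We compute $h^*(R;t)$ directly from $L_R(k)$ by the same lattice-point counting as in Proposition~\ref{prop:crosspolytope-magic}, now on the primal side. Under the identification $\varphi$, we have $R=\conv(\pm e_1,\ldots,\pm e_d)$ with lattice $\Lambda=\{n+\tfrac12 c : n\in\ZZ^d,\ c\in C\}$, where $c$ is identified with its representative in $\{0,1\}^d$. Hence
\[
L_R(k)=\sum_{c\in C}\#\{x\in n+\tfrac12c:\ |x_1|+\cdots+|x_d|\le k\}.
\]
The plan is to compute the Ehrhart series coset by coset and sum over $C$.

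For a fixed codeword $c$ of weight $w=2j$, the count factors coordinatewise. Coordinates outside the support of $c$ range over $\ZZ$, and coordinates in the support range over $\ZZ+\tfrac12$. Using generating functions for the $\ell^1$-norm, the series $\sum_k t^k\#\{\ldots\}$ is a coefficient extraction from a product of one-variable series. An integer coordinate contributes $\sum_{x\in\ZZ}q^{|x|}=\frac{1+q}{1-q}$. A half-integer coordinate contributes $\sum_{x\in\ZZ+1/2}q^{|x|}=\frac{2q^{1/2}}{1-q}$. The weight $w$ is even, so the half-integer exponents combine to the integer power $q^{w/2}=q^j$. Summing the dilation condition $\sum|x_i|\le k$ introduces one extra factor $\frac1{1-t}$ with $q=t$. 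This gives
\[
\sum_{k\ge0}t^k\#\{x\in\ZZ^d+\tfrac12c:\|x\|_1\le k\}
=\frac{(1+t)^{d-2j}\,2^{2j}t^{j}}{(1-t)^{d+1}}.
\]

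Summing over $c\in C$ yields
\[
\sum_{k\ge0}L_R(k)t^k=\frac{\sum_j 4^jA_{2j}t^j(1+t)^{d-2j}}{(1-t)^{d+1}},
\]
so the numerator is $h^*(R;t)$. Each term $t^j(1+t)^{d-2j}$ is palindromic with center $d/2$, so this is exactly the $\gamma$-expansion with $\gamma_j=4^jA_{2j}\ge0$. This is consistent with $R$ being reflexive of degree $d$.

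The main care point is the half-integer coordinate count. For $k$ an integer, we need the number of $x\in(\ZZ+\tfrac12)^{\mathrm{supp}\,c}\times\ZZ^{\text{rest}}$ with $\sum|x_i|\le k$. The sum $\sum|x_i|$ always lies in $j+\ZZ$, hence is an integer, so the condition matches the coefficient extraction with no rounding. This is exactly where evenness of $C$ enters.

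One could instead cross-check via Proposition~\ref{prop:crosspolytope-magic} and Ehrhart–Macdonald reciprocity together with MacWilliams' identity, but the direct count above is cleaner and I would use it.
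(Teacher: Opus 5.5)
Your proposal is correct and takes essentially the same route as the paper. You decompose $\Lambda$ into the cosets $\ZZ^d+\frac12c$ ($c\in C$), compute the $\ell^1$-generating function coordinatewise via $\frac{1+t}{1-t}$ and $\frac{2t^{1/2}}{1-t}$, and use the integrality of $\|x\|_1$ on $\Lambda$ (which the paper derives from sign vectors lying in $\Lambda^\vee$, and you derive equivalently from evenness of $C$) to justify the extra factor $\frac{1}{1-t}$ in passing to the Ehrhart series.
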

\begin{proof}
For $x=(x_1,\ldots,x_d)\in\RR^d$, write
$\|x\|_1=|x_1|+\cdots+|x_d|$.
Using the lattice realization
$\ZZ^d\subseteq\Lambda\subseteq\frac12\ZZ^d$
from Section~\ref{sec:crosspolytope}, decompose $\Lambda$ into cosets
of $\ZZ^d$ indexed by $C$.

For an integral coordinate,
$\sum_{n\in\ZZ}t^{|n|}
=
\frac{1+t}{1-t}$,
whereas for a half-integral coordinate,
\[
\sum_{n\in\ZZ}t^{|n+1/2|}
=
\frac{2t^{1/2}}{1-t}.
\]
Hence a codeword of weight $2j$ contributes
\[
\frac{4^jt^j(1+t)^{d-2j}}{(1-t)^d}
\]
to
$\sum_{x\in\Lambda}t^{\|x\|_1}$.
Therefore
\[
\sum_{x\in\Lambda}t^{\|x\|_1}
=
\frac{1}{(1-t)^d}
\sum_{j=0}^{\lfloor d/2\rfloor}
4^jA_{2j}t^j(1+t)^{d-2j}.
\]
Since every sign vector belongs to $\Lambda^\vee$, for every
$x\in\Lambda$ the value $\|x\|_1$ is an integer.
Thus
\[
\sum_{k\ge0}L_R(k)t^k
=
\frac{1}{1-t}
\sum_{x\in\Lambda}t^{\|x\|_1}.
\]
Comparing this expression with the Ehrhart series
\[
\sum_{k\ge0}L_R(k)t^k
=
\frac{h^*(R;t)}{(1-t)^{d+1}}
\]
gives the desired formula.
\end{proof}

Together with Nill's classification and the remaining argument in
\cite[Theorem~5.8]{OhsugiTsuchiyaLocally}, Proposition~
\ref{prop:crosspolytope-gamma} completes the proof of that theorem.

\subsection*{Declaration on the use of generative AI}
OpenAI's ChatGPT was used for exploratory discussions of proof
strategies, English-language editing, and \LaTeX{} assistance.
All mathematical arguments and references were verified by the authors.

\bibliographystyle{plain}
\bibliography{bibliography}

@article {OhsugiTsuchiyaLocally,
    AUTHOR = {Ohsugi, Hidefumi and Tsuchiya, Akiyoshi},
     TITLE = {The {$h^*$}-polynomials of locally anti-blocking lattice
              polytopes and their {$\gamma$}-positivity},
   JOURNAL = {Discrete Comput. Geom.},
  FJOURNAL = {Discrete \& Computational Geometry. An International Journal
              of Mathematics and Computer Science},
    VOLUME = {66},
      YEAR = {2021},
    NUMBER = {2},
     PAGES = {701--722},
      ISSN = {0179-5376,1432-0444},
   MRCLASS = {52B20 (05C31)},
  MRNUMBER = {4292760},
MRREVIEWER = {Ruriko\ Yoshida},
       DOI = {10.1007/s00454-020-00236-6},
       URL = {https://doi.org/10.1007/s00454-020-00236-6},
}

@book {SturmfelsGBCP,
    AUTHOR = {Sturmfels, Bernd},
     TITLE = {Gr\"{o}bner bases and convex polytopes},
    SERIES = {University Lecture Series},
    VOLUME = {8},
 PUBLISHER = {American Mathematical Society, Providence, RI},
      YEAR = {1996},
     PAGES = {xii+162},
      ISBN = {0-8218-0487-1},
   MRCLASS = {13P10 (14M25 52B20)},
  MRNUMBER = {1363949},
MRREVIEWER = {P.\ Schenzel},
       DOI = {10.1090/ulect/008},
       URL = {https://doi.org/10.1090/ulect/008},
}

@book {BeckRobins,
    AUTHOR = {Beck, Matthias and Robins, Sinai},
     TITLE = {Computing the continuous discretely},
    SERIES = {Undergraduate Texts in Mathematics},
   EDITION = {Second},
      NOTE = {Integer-point enumeration in polyhedra,
              With illustrations by David Austin},
 PUBLISHER = {Springer, New York},
      YEAR = {2015},
     PAGES = {xx+285},
      ISBN = {978-1-4939-2968-9; 978-1-4939-2969-6},
   MRCLASS = {11P21 (05A15 05B15 11-02 11H06 52B05 52B20)},
  MRNUMBER = {3410115},
       DOI = {10.1007/978-1-4939-2969-6},
       URL = {https://doi.org/10.1007/978-1-4939-2969-6},
}

@article {HibiDual,
    AUTHOR = {Hibi, Takayuki},
     TITLE = {Dual polytopes of rational convex polytopes},
   JOURNAL = {Combinatorica},
  FJOURNAL = {Combinatorica. An International Journal on Combinatorics and
              the Theory of Computing},
    VOLUME = {12},
      YEAR = {1992},
    NUMBER = {2},
     PAGES = {237--240},
      ISSN = {0209-9683},
   MRCLASS = {52B20},
  MRNUMBER = {1179260},
MRREVIEWER = {P.\ McMullen},
       DOI = {10.1007/BF01204726},
       URL = {https://doi.org/10.1007/BF01204726},
}

@article {StanleyDecompositions,
    AUTHOR = {Stanley, Richard P.},
     TITLE = {Decompositions of rational convex polytopes},
   JOURNAL = {Ann. Discrete Math.},
  FJOURNAL = {Annals of Discrete Mathematics},
    VOLUME = {6},
      YEAR = {1980},
     PAGES = {333--342},
   MRCLASS = {52A43},
  MRNUMBER = {593545},
MRREVIEWER = {P.\ McMullen},
}

@book {AndrewsAskeyRoy,
    AUTHOR = {Andrews, George E. and Askey, Richard and Roy, Ranjan},
     TITLE = {Special functions},
    SERIES = {Encyclopedia of Mathematics and its Applications},
    VOLUME = {71},
 PUBLISHER = {Cambridge University Press, Cambridge},
      YEAR = {1999},
     PAGES = {xvi+664},
      ISBN = {0-521-62321-9; 0-521-78988-5},
   MRCLASS = {33-01 (33-02)},
  MRNUMBER = {1688958},
MRREVIEWER = {Bruce\ C.\ Berndt},
       DOI = {10.1017/CBO9781107325937},
       URL = {https://doi.org/10.1017/CBO9781107325937},
}

@misc{AvilaFerroniMorales,
  author        = {Avila, Nicolas and Ferroni, Luis and Morales, Alejandro H.},
  title         = {Luck and magic for {Pitman-Stanley} polytopes and parking functions},
  year          = {2026, arXiv:2603.19194},
  eprint        = {2603.19194},
  archiveprefix = {arXiv},
  primaryclass  = {math.CO},
  doi           = {10.48550/arXiv.2603.19194},
  url           = {https://arxiv.org/abs/2603.19194}
}

@article {BatyrevSelivanova,
    AUTHOR = {Batyrev, Victor V. and Selivanova, Elena N.},
     TITLE = {Einstein-{K}\"ahler metrics on symmetric toric {F}ano
              manifolds},
   JOURNAL = {J. Reine Angew. Math.},
  FJOURNAL = {Journal f\"ur die Reine und Angewandte Mathematik. [Crelle's
              Journal]},
    VOLUME = {512},
      YEAR = {1999},
     PAGES = {225--236},
      ISSN = {0075-4102,1435-5345},
   MRCLASS = {32Q20 (14J45 14M25)},
  MRNUMBER = {1703080},
MRREVIEWER = {Jaros\l aw\ A.\ Wi\'sniewski},
       DOI = {10.1515/crll.1999.054},
       URL = {https://doi.org/10.1515/crll.1999.054},
}

@article {BrandenPF,
    AUTHOR = {Br\"and\'en, Petter},
     TITLE = {On linear transformations preserving the {P}\'olya frequency
              property},
   JOURNAL = {Trans. Amer. Math. Soc.},
  FJOURNAL = {Transactions of the American Mathematical Society},
    VOLUME = {358},
      YEAR = {2006},
    NUMBER = {8},
     PAGES = {3697--3716},
      ISSN = {0002-9947,1088-6850},
   MRCLASS = {05A15 (05A05 05A19 20F55 26C10)},
  MRNUMBER = {2218995},
MRREVIEWER = {Toshihiro\ Watanabe},
       DOI = {10.1090/S0002-9947-06-03856-6},
       URL = {https://doi.org/10.1090/S0002-9947-06-03856-6},
}

@book {CoxLittleSchenck,
    AUTHOR = {Cox, David A. and Little, John B. and Schenck, Henry K.},
     TITLE = {Toric varieties},
    SERIES = {Graduate Studies in Mathematics},
    VOLUME = {124},
 PUBLISHER = {American Mathematical Society, Providence, RI},
      YEAR = {2011},
     PAGES = {xxiv+841},
      ISBN = {978-0-8218-4819-7},
   MRCLASS = {14M25 (05A15 05E45 52B12)},
  MRNUMBER = {2810322},
MRREVIEWER = {Ivan\ Arzhantsev},
       DOI = {10.1090/gsm/124},
       URL = {https://doi.org/10.1090/gsm/124},
}

@article {deBoorDividedDifferences,
    AUTHOR = {de Boor, Carl},
     TITLE = {Divided differences},
   JOURNAL = {Surv. Approx. Theory},
  FJOURNAL = {Surveys in Approximation Theory},
    VOLUME = {1},
      YEAR = {2005},
     PAGES = {46--69},
      ISSN = {1555-578X},
   MRCLASS = {41A05 (41-02)},
  MRNUMBER = {2221566},
MRREVIEWER = {Peter\ Alfeld},
}

@article {DubeauHermite,
    AUTHOR = {Dubeau, Fran{\c{c}}ois},
     TITLE = {On {H}ermite interpolation and divided differences},
   JOURNAL = {Surv. Math. Appl.},
  FJOURNAL = {Surveys in Mathematics and its Applications},
    VOLUME = {15},
      YEAR = {2020},
     PAGES = {257--279},
      ISSN = {1843-7265,1842-6298},
   MRCLASS = {41A05 (65D05)},
  MRNUMBER = {4099738},
MRREVIEWER = {Antonio\ L\'opez-Carmona},
     URL = {https://www.utgjiu.ro/math/sma/v15/a15_08.html},
}

@article {Ewald,
    AUTHOR = {Ewald, G\"unter},
     TITLE = {On the classification of toric {F}ano varieties},
   JOURNAL = {Discrete Comput. Geom.},
  FJOURNAL = {Discrete \& Computational Geometry. An International Journal
              of Mathematics and Computer Science},
    VOLUME = {3},
      YEAR = {1988},
    NUMBER = {1},
     PAGES = {49--54},
      ISSN = {0179-5376,1432-0444},
   MRCLASS = {14L32 (14J40)},
  MRNUMBER = {918178},
MRREVIEWER = {I.\ Dolgachev},
       DOI = {10.1007/BF02187895},
       URL = {https://doi.org/10.1007/BF02187895},
}

@book {EwaldBook,
    AUTHOR = {Ewald, G\"unter},
     TITLE = {Combinatorial convexity and algebraic geometry},
    SERIES = {Graduate Texts in Mathematics},
    VOLUME = {168},
 PUBLISHER = {Springer-Verlag, New York},
      YEAR = {1996},
     PAGES = {xiv+372},
      ISBN = {0-387-94755-8},
   MRCLASS = {52B20 (14M25 52B05)},
  MRNUMBER = {1418400},
MRREVIEWER = {P.\ McMullen},
       DOI = {10.1007/978-1-4612-4044-0},
       URL = {https://doi.org/10.1007/978-1-4612-4044-0},
}

@incollection {FeinsilverKocik,
    AUTHOR = {Feinsilver, Philip and Kocik, Jerzy},
     TITLE = {Krawtchouk polynomials and {K}rawtchouk matrices},
 BOOKTITLE = {Recent advances in applied probability},
     PAGES = {115--141},
 PUBLISHER = {Springer, New York},
      YEAR = {2005},
      ISBN = {0-387-23378-4},
   MRCLASS = {33D45 (05B20)},
  MRNUMBER = {2102950},
       DOI = {10.1007/0-387-23394-6\_5},
       URL = {https://doi.org/10.1007/0-387-23394-6_5},
}

@misc{FerroniShenanigans,
  author        = {Ferroni, Luis},
  title         = {Unimodality shenanigans in {Ehrhart} theory},
  year          = {2026, arXiv:2609.10513},
  eprint        = {2609.10513},
  archiveprefix = {arXiv},
  primaryclass  = {math.CO},
  doi           = {10.48550/arXiv.2609.10513},
  url           = {https://arxiv.org/abs/2609.10513}
}

@article {FerroniHigashitani,
    AUTHOR = {Ferroni, Luis and Higashitani, Akihiro},
     TITLE = {Examples and counterexamples in {Ehrhart} theory},
   JOURNAL = {EMS Surv. Math. Sci. },
  FJOURNAL = {EMS Surveys in Mathematical Sciences},
    VOLUME = {},
      YEAR = {2024},
    NUMBER = {},
     PAGES = {},
      ISSN = {},
   MRCLASS = {},
  MRNUMBER = {},
MRREVIEWER = {},
       DOI = {10.4171/EMSS/86
},
       URL = {https://ems.press/journals/emss/articles/14298122},
         note    = {Published online first}
}

@article {Gal,
    AUTHOR = {Gal, \'Swiatos\l aw R.},
     TITLE = {Real root conjecture fails for five- and higher-dimensional
              spheres},
   JOURNAL = {Discrete Comput. Geom.},
  FJOURNAL = {Discrete \& Computational Geometry. An International Journal
              of Mathematics and Computer Science},
    VOLUME = {34},
      YEAR = {2005},
    NUMBER = {2},
     PAGES = {269--284},
      ISSN = {0179-5376,1432-0444},
   MRCLASS = {52B70 (05E25 06A07)},
  MRNUMBER = {2155722},
MRREVIEWER = {Wolfgang\ K\"uhnel},
       DOI = {10.1007/s00454-005-1171-5},
       URL = {https://doi.org/10.1007/s00454-005-1171-5},
}

@phdthesis{HaaseThesis,
   author       = {Haase, Christian},
  title        = {Lattice Polytopes and Triangulations: With Applications to Toric Geometry},
  school       = {Technische Universit{\"a}t Berlin},
  year         = {2000},
  url          = {http://kobv.de}
}

@article {KonoikeCycles,
    AUTHOR = {Konoike, Masato},
     TITLE = {A new class of magic positive {E}hrhart polynomials of
              reflexive polytopes},
   JOURNAL = {S\'em. Lothar. Combin.},
  FJOURNAL = {S\'eminaire Lotharingien de Combinatoire},
    VOLUME = {93B},
      YEAR = {2025},
     PAGES = {Art. 148, 11},
      ISSN = {1286-4889},
   MRCLASS = {52B20 (05A15)},
  MRNUMBER = {4975683},
}

@misc{LiuZhangPartialPermutohedra,
  author        = {Liu, Feihu and Zhang, Zihao},
  title         = {Magic Positivity for the {Ehrhart} Polynomials of Partial Permutohedra},
  year          = {2026, arXiv:2607.03854},
  eprint        = {2607.03854},
  archiveprefix = {arXiv},
  primaryclass  = {math.CO},
  doi           = {10.48550/arXiv.2607.03854},
  url           = {https://arxiv.org/abs/2607.03854}
}

@article {LorenzNill,
    AUTHOR = {Lorenz, Benjamin and Nill, Benjamin},
     TITLE = {On smooth {G}orenstein polytopes},
   JOURNAL = {Tohoku Math. J. (2)},
  FJOURNAL = {The Tohoku Mathematical Journal. Second Series},
    VOLUME = {67},
      YEAR = {2015},
    NUMBER = {4},
     PAGES = {513--530},
      ISSN = {0040-8735,2186-585X},
   MRCLASS = {52B20 (14J45 14M25)},
  MRNUMBER = {3436539},
MRREVIEWER = {Mateusz\ Micha\l ek},
       DOI = {10.2748/tmj/1450798070},
       URL = {https://doi.org/10.2748/tmj/1450798070},
}

@article {MatsuiEtAlGraphs,
    AUTHOR = {Matsui, Tetsushi and Higashitani, Akihiro and Nagazawa, Yuuki
              and Ohsugi, Hidefumi and Hibi, Takayuki},
     TITLE = {Roots of {E}hrhart polynomials arising from graphs},
   JOURNAL = {J. Algebraic Combin.},
  FJOURNAL = {Journal of Algebraic Combinatorics. An International Journal},
    VOLUME = {34},
      YEAR = {2011},
    NUMBER = {4},
     PAGES = {721--749},
      ISSN = {0925-9899,1572-9192},
   MRCLASS = {52B12 (05C31)},
  MRNUMBER = {2842918},
MRREVIEWER = {Mihai\ Cipu},
       DOI = {10.1007/s10801-011-0290-8},
       URL = {https://doi.org/10.1007/s10801-011-0290-8},
}

@incollection {NillPseudoSymmetric,
    AUTHOR = {Nill, Benjamin},
     TITLE = {Classification of pseudo-symmetric simplicial reflexive
              polytopes},
 BOOKTITLE = {Algebraic and geometric combinatorics},
    SERIES = {Contemp. Math.},
    VOLUME = {423},
     PAGES = {269--282},
 PUBLISHER = {Amer. Math. Soc., Providence, RI},
      YEAR = {2006},
      ISBN = {978-0-8218-4080-1; 0-8218-4080-0},
   MRCLASS = {52B20 (14M25 52B12)},
  MRNUMBER = {2298762},
MRREVIEWER = {Seth\ Sullivant},
       DOI = {10.1090/conm/423/08082},
       URL = {https://doi.org/10.1090/conm/423/08082},
}

@article {SongAlpha,
    AUTHOR = {Song, Jian},
     TITLE = {The {$\alpha$}-invariant on toric {F}ano manifolds},
   JOURNAL = {Amer. J. Math.},
  FJOURNAL = {American Journal of Mathematics},
    VOLUME = {127},
      YEAR = {2005},
    NUMBER = {6},
     PAGES = {1247--1259},
      ISSN = {0002-9327,1080-6377},
   MRCLASS = {32Q20 (14J45 14M25)},
  MRNUMBER = {2183524},
MRREVIEWER = {Jaros\l aw\ A.\ Wi\'sniewski},
       URL =
              {http://muse.jhu.edu/journals/american_journal_of_mathematics/v127/127.6song.pdf},
}
\end{document}